\theoremstyle{remark}
\newtheorem{theorem}{\bf Theorem}[section]
\newtheorem{proposition}[theorem]{\bf Proposition}
\newtheorem{lemma}[theorem]{\bf Lemma}
\newtheorem{definition}[theorem]{\bf Definition}
\newtheorem{remark}[theorem]{\bf Remark}
\newtheorem{corollary}[theorem]{\bf Corollary}
\newtheorem{case}{\bf Case}
\newtheorem*{rem}{Remark}
\author{Kin Wai (Edisy) Chan \\ Mentor: Dr. Eric Wambach}
\title{Zeta-functions of Curves over Finite Fields}
\date{}
\begin{document}
\maketitle

\begin{abstract}
Curves over finite fields are of great importance in cryptography and coding theory.  Through studying their zeta-functions, we would be able to find out vital arithmetic and geometric information about them and their Jacobians, including the number of rational points on this kind of curves.  In this paper, I investigate if it is possible to construct a curve over finite fields of a given genus $g$ whose zeta-function is given as a product of zeta-functions of $g$ elliptic curves, and find out alternative methods if it is not possible.  Basically, I look for conditions which those $g$ elliptic curves should satisfy such that their product (of their Jacobians) is isogenous to the Jacobian of a curve of a given genus $g$.  Then from this isogenous relationship I can determine the characteristic polynomial of the Frobenius endomorphism of the Jacobian of the new curve and by this characteristic polynomial I can thus determine the zeta-function of this new curve.  By using the zeta-functions of curves in the form as generating functions, the number of rational points on curves can even be found out, which may lead to further researches relating to some applications in cryptography, coding theory and even information theory.
\end{abstract}

\section{Introduction}

\quad It is known that to determine all possible zeta-functions of the curves over finite fields of a given genus g is a very difficult task. In some sense, this is the same task of determining whether a given abelian variety is isogenous to a Jacabian.  For $g=1$, the problem is completely solved by Deuring\cite{bib: zeta3}. For g=2, Serre\cite{bib: zeta4} and R$\mathrm{\ddot{u}}$ck\cite{bib: zeta5} give the partial answer.

In this paper, I investigate for which $g$ it is possible to apply an elementary construction method using elliptic curves to find some of the zeta-functions of curves of genus $g$ whose Jacobian is isogenous to the product of $g$ elliptic curves. That is to say, I find out when and how the product of $g$ elliptic curves satisfying certain conditions is isogenous to the Jacobian of a curve of genus $g$. This is possible for $g=3$ (Corollary 3.4), but it is impossible for $g\geq 4$ (Remark 4.3). A direct result of Corollary 3.4 is Theorem 4.2 where the conditions are more numerical than those in Corollary 3.4.

There is a fact that the reciprocal of the numerator of the zeta-function of a curve is the characteristic polynomial of the Frobenius endomorphism of the Jacobian of the curve. Hence, it suffices to determine which polynomial can become the characteristic polynomial of the Frobenius endomorphism of the Jacobian of a curve of genus 3 in order to determine some of the zeta-functions of curves of genus 3.

In Section 2, I introduce some basic facts about elliptic curves and study 2-torsion points. An elementary construction method using elliptic curves is presented in Section 3. I determine part of the characteristic polynomials of the Frobenius endomorphisms of the Jacobians of curves of genus 3 and explain briefly why this cannot be done for $g\geq 4$ in Section 4.

I adopt the following notations throughout the paper:

\begin{itemize}
\item $q=p^r$: $p$ is a prime and $r\in\mathbb{N}$;
\item $\chi/\mathbb{F}_q$: a (smooth projective) algebraic curve over $\mathbb{F}_q$;
\item $\mathcal{E}/\mathbb{F}_q$: an elliptic curve over $\mathbb{F}_q$;
\item $g=g(\chi)$: the genus of $\chi$;
\item $\mathbb{F}_q(\chi)$: the function field of $\chi$;
\item $\chi(\mathbb{F}_q)$: the set of the $\mathbb{F}_q$--rational points of $\chi$;
\item $\mathcal{J}_\chi$: the Jacobian variety of $\chi$;
\item $\pi_\mathcal{A}$: the Frobenius morphism of a variety $\mathcal{A}/\mathbb{F}_q$;
\item $\pi$: the Frobenius morphism of $\mathbb{P}^1/\mathbb{F}_q$.
\end{itemize}

\section{Elliptic curve}
\quad An elliptic curve is a 1--dimensional smooth projective variety of genus 1 with a specified base point, i.e. an algebraic curve of genus 1 having a specified rational point.  By considering the map $\mathcal{E}/\mathbb{F}_q\rightarrow\mathbb{P}^1/\mathbb{F}_q$  such that $[x: y: 1]\mapsto[x: 1]$ and $[0:1:0]\mapsto[1:0]$ , we know that $\mathcal{E}$  is a covering of degree 2 over the projective line  $\mathbb{P}^1$ with set of ramified points being the set $\mathcal{E}[2]$ of the 2-torsion points.  It is clear that $|\mathcal{E}[2]|=4$ for odd $p$ and $|\mathcal{E}[2]|=1$ or $2$ for $p=2$.

Since I attempt to apply a similar method in \cite{bib: zeta1} to do the generalization, I need to use the following lemma from \cite{bib: zeta9} as well.

\bigskip
\begin{lemma}
\cite[Theorem 4.1]{bib: zeta9} Let $q=p^r$ a prime $p$ and an integer $r\geq 1$. Let $t$ be an integer with $|t|\leq 2\sqrt{q}$. Then there is an elliptic curve over $\mathbb{F}_q$ with $q+1+t$ rational points if and only if $t$ satisfies one of the following conditions:
\newcounter{count0}
\begin{list}{(\roman{count0})}{\usecounter{count0}}
\item $\gcd(p,t)=1$;
\item $r$ is even: $t=\pm 2\sqrt{q}$;
\item $r$ is even and $p\not\equiv 1\pmod 3$: $t=\pm\sqrt{q}$;
\item $r$ is odd and $p=2$ or 3: $t=\pm\sqrt{pq}$;
\item $r$ is odd: $t=0$;
\item $r$ is even and $p\not\equiv 1\pmod 4$: $t=0$.
\end{list}
\end{lemma}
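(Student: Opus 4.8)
The plan is to recognise this as Waterhouse's classification of the Frobenius traces of elliptic curves over $\mathbb{F}_q$, and to prove it by combining Honda--Tate theory with Deuring's results on elliptic curves with complex multiplication. Rephrasing in terms of the Frobenius $\pi_{\mathcal{E}}$: a curve $\mathcal{E}/\mathbb{F}_q$ with $q+1+t$ rational points has $\pi_{\mathcal{E}}$ a root of $x^2+tx+q$, so $t=-\mathrm{tr}\,\pi_{\mathcal{E}}$, and the Hasse bound $|t|\le 2\sqrt q$ is automatic. Since the quadratic twist of $\mathcal{E}$ has $q+1-t$ points, only $|t|$ matters, so I may fix whichever sign is convenient. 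By Honda--Tate, isogeny classes of elliptic curves over $\mathbb{F}_q$ correspond bijectively to conjugacy classes of Weil $q$-numbers $\pi$ (algebraic integers with $|\sigma(\pi)|=\sqrt q$ for every embedding $\sigma$) for which the associated simple abelian variety has dimension $1$; and Tate's theorem computes that dimension as $\tfrac12[\mathbb{Q}(\pi):\mathbb{Q}]$ times the least common multiple of the denominators of the local invariants of $\mathrm{End}^0$. So the argument reduces to: for each integer $t$ with $|t|\le 2\sqrt q$, examine the root $\pi$ of $x^2+tx+q$, decide whether it is a Weil $q$-number, and if so whether these formulas force the dimension to be $1$.

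Then I would split along the ordinary/supersingular dichotomy, which for elliptic curves says $\mathcal{E}$ is ordinary iff $p\nmid t$ and supersingular iff $p\mid t$. In the ordinary case I want every $t$ with $\gcd(p,t)=1$: first note $t^2<4q$ strictly (equality would force $q$ a square and $p\mid t$), so $\pi$ generates the imaginary quadratic field $K=\mathbb{Q}(\sqrt{t^2-4q})$ and is visibly a Weil $q$-number; a short computation with $t^2-4q\equiv t^2\pmod p$ shows $p$ splits in $K$, whence the $\mathrm{End}^0$ invariants at $p$ vanish and the dimension formula returns $1$. For the actual existence I would quote Deuring's reduction theorem rather than reprove it: pick an order $\mathcal{O}\subseteq K$ containing $\pi$, an elliptic curve over a number field with complex multiplication by $\mathcal{O}$, and reduce modulo a prime above $p$ of the appropriate residue degree; the reduction is an ordinary elliptic curve over $\mathbb{F}_q$ whose Frobenius is the prescribed $\pi$ (after a twist adjusting the sign of $t$). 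This yields condition (i).

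The supersingular case $p\mid t$ is the substantive part, and I would treat the finitely many numerically admissible $t$ one at a time, in each case identifying $\mathbb{Q}(\pi)$ and testing whether a dimension-$1$ object exists. For $t=0$: $\pi^2=-q$; if $r$ is odd then $\mathbb{Q}(\pi)=\mathbb{Q}(\sqrt{-p})$, $p$ is ramified, and Honda--Tate yields a curve (condition (v)); if $r$ is even then $\mathbb{Q}(\pi)=\mathbb{Q}(\sqrt{-1})$, and the invariant at $p$ is trivial exactly when $p$ splits in $\mathbb{Q}(i)$, i.e. $p\equiv 1\pmod 4$, in which case the isogeny class has dimension $2$ and there is no elliptic curve, whereas for $p\not\equiv 1\pmod 4$ one gets dimension $1$ (condition (vi)). For $t=\pm 2\sqrt q$ (integrality needs $r$ even): $\pi=\pm\sqrt q\in\mathbb{Q}$, the endomorphism algebra is the quaternion algebra ramified exactly at $p$ and $\infty$, the dimension is $1$, and such curves always exist (condition (ii)). For $t=\pm\sqrt q$ ($r$ even): the discriminant is $-3q$, so $\mathbb{Q}(\pi)=\mathbb{Q}(\sqrt{-3})$, and a curve exists iff $p$ is non-split there, i.e. $p\not\equiv 1\pmod 3$ (condition (iii)). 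For $t=\pm\sqrt{pq}$ ($r$ odd): the Hasse bound already forces $p\le 3$, and for $p\in\{2,3\}$ the field $\mathbb{Q}(\pi)$ is imaginary quadratic with $p$ ramified, so Honda--Tate supplies a curve (condition (iv)). Assembling the surviving cases gives exactly (i)--(vi).

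The main obstacle, and the genuinely non-elementary input, is Honda--Tate theory in both directions --- the existence half (every Weil $q$-number is realised by an abelian variety) and Tate's computation of $\mathrm{End}^0$ and of the dimension from local invariants. These are precisely what translate the elementary arithmetic of the quadratic fields $\mathbb{Q}(\sqrt{t^2-4q})$ into statements about elliptic curves, and what rule out the ``phantom'' supersingular traces such as $t=0$ with $r$ even and $p\equiv 1\pmod 4$. The explicit realisation in the ordinary case likewise rests on Deuring's complex-multiplication lifting and reduction theorems. The remaining bookkeeping --- the Hasse bound, the $p\mid t$ criterion for supersingularity, and the case analysis of discriminants and of the splitting of $p$ --- is routine, although I would be careful with the normalisation of local invariants, since that is where sign errors tend to creep in.
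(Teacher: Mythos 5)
The paper does not prove this lemma at all: it is quoted as \cite[Theorem 4.1]{bib: zeta9} (Waterhouse), so there is no internal proof to compare yours against. Your sketch is, in outline, the standard argument --- essentially Waterhouse's own: translate the point count into the Frobenius trace, use the Honda--Tate correspondence together with Tate's local-invariant formula for the dimension of the simple isogeny factor attached to a Weil $q$-number, settle the ordinary case by Deuring's lifting/reduction, and run through the supersingular Weil numbers one quadratic field at a time. The case analysis lands on exactly the list (i)--(vi), and the individual field computations ($\mathbb{Q}(\sqrt{t^2-4q})$ with $p$ split for $\gcd(p,t)=1$; $\mathbb{Q}(\sqrt{-p})$, $\mathbb{Q}(i)$, $\mathbb{Q}(\sqrt{-3})$, and the rational quaternion case for the supersingular traces) are correct. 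Two caveats. First, in the case $t=0$, $r$ even, you write that ``the invariant at $p$ is trivial exactly when $p$ splits in $\mathbb{Q}(i)$''; it is the opposite: when $p$ splits the two local invariants are each $\tfrac12$, hence nontrivial, and that is precisely what forces the simple factor to have dimension $2$ and excludes an elliptic curve for $p\equiv 1\pmod 4$. Your stated conclusion is right, so this is only a slip of wording, but it is exactly the normalization point you yourself flag as error-prone. Second, the ``only if'' direction also requires ruling out the supersingular traces $p\mid t$ that are \emph{not} on the list --- for instance $t=p$ with $q=p^3$, where $p$ splits in $\mathbb{Q}(\pi)$ and the valuations of $\pi$ at the two primes above $p$ give invariants with denominator $3$, so the simple factor has dimension $3$. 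Your general framework (``for each $t$, decide whether the dimension is $1$'') covers this, but your enumerated cases only treat the traces that do occur, so the exclusion step should be made explicit for the write-up to be complete.
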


\bigskip
Let $R_\mathcal{E}$ be the set of ramified points of  $\mathbb{P}^1$ for the covering $\mathcal{E}/\mathbb{F}_q\rightarrow\mathbb{P}^1/\mathbb{F}_q$. By looking at the form of the covering map, we can easily know that $|R_\mathcal{E}|=|\mathcal{E}[2]|$. Conversely, if we have $R\subseteq\mathbb{P}^1$ such that $\pi(R)=R$ and a $Q\in R$ such that $\pi(Q)=Q$ for the Frobenius map $\pi$, and if $|R|=4$ for odd $p$ and $|R|=1$ or 2 for $p=2$ as well, then there exists an automorphism $g$ in the projective general linear group $\mathrm{PGL}(2, q)$ such that $g(Q)=[1: 0]$ and an elliptic curve $\mathcal{E}/\mathbb{F}_q$ such that $R_\mathcal{E}=R$.

\begin{displaymath}
\xymatrix
{\mathcal{E}[2]\ar[dd]&&\mathcal{E}\ar[dd] \\ \\
R_\mathcal{E}&&\mathbb{P}^1}
\end{displaymath}

\bigskip
Now if we suppose that $p$ is odd, then an elliptic curve $\mathcal{E}$ is totally determined by $R_\mathcal{E}$. That is to say if there exists an automorphism $\sigma\in \mathrm{PGL}(2,q)$ such that $\sigma(R_{\mathcal{E}_1})=R_{\mathcal{E}_2}$, then $\mathcal{E}_1\cong \mathcal{E}_2$. Throughout this paper, p is assumed to be odd, so we have $|R_\mathcal{E}|=|\mathcal{E}[2]|=4$ for any elliptic curve $\mathcal{E}/\mathbb{F}_q$.

From now on, I fix the map $\mathcal{E}/\mathbb{F}_q\rightarrow\mathbb{P}^1/\mathbb{F}_q$ so that the notation $R_\mathcal{E}$ makes sense.

In studying elliptic curves, we sometimes use another more convenient form of Weierstrass equation for an elliptic curve.  This is called the \textit{Legendre form}.

\bigskip
\begin{definition}
An elliptic curve $\mathcal{E}$ is in \textit{Legendre form} if its Weierstrass equation can be written as
\[
y^2=x(x-1)(x-\lambda).
\]
\end{definition}

\bigskip
In this paper, $\lambda$ is called a \textit{Legendre coefficient of $\mathcal{E}$}. If an elliptic curve $\mathcal{E}$ is in Legendre form, then $\mathcal{E}$ is denoted as $\mathcal{E}_\lambda$ instead. In fact, we have the following proposition illustrating the usefulness of Legendre form.

\bigskip
\begin{proposition}
\cite[Proposition III.1.7 (a)(b)]{bib: zeta6} Let $p$ be an odd prime. Then every elliptic curve $\mathcal{E}/\mathbb{F}_q$ is isomorphic (over $\bar{\mathbb{F}}_q$) to an elliptic curve in \textit{Legendre form}
\[
\mathcal{E}_\lambda:y^2=x(x-1)(x-\lambda)
\]
for some $\lambda\in \bar{\mathbb{F}}_q$, $\lambda\neq 0,1$ (Note that $\mathcal{E}_\lambda$ is \textit{defined over $\bar{\mathbb{F}}_q$}). Moreover, the \textit{$j$-invariant of $\mathcal{E}_\lambda$} is
\[
j(\mathcal{E}_\lambda)=2^8\frac{(\lambda^2-\lambda+1)^3}{\lambda^2(\lambda-1)^2}.
\]
\end{proposition}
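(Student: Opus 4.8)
The plan is to start from an arbitrary Weierstrass model of $\mathcal{E}$ and reduce it to Legendre form by a short chain of coordinate changes, keeping careful track of where each change is defined; the need to pass to $\bar{\mathbb{F}}_q$ will appear precisely in one of these steps. First, since $p$ is odd we may complete the square in $y$: the substitution $y\mapsto y-\tfrac12(a_1x+a_3)$ turns a general Weierstrass equation into one of the shape $y^2=f(x)$ with $f$ monic of degree $3$. Smoothness of $\mathcal{E}$ forces $f$ to have no repeated root (equivalently, nonzero discriminant), so over $\bar{\mathbb{F}}_q$ we may write $f(x)=(x-e_1)(x-e_2)(x-e_3)$ with $e_1,e_2,e_3\in\bar{\mathbb{F}}_q$ pairwise distinct.

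Next I would normalise the three roots. Apply the affine change $x=(e_2-e_1)u+e_1$ together with $y=(e_2-e_1)^{3/2}v$, where $(e_2-e_1)^{1/2}$ is a fixed square root in $\bar{\mathbb{F}}_q$; this is the only step that genuinely requires the algebraic closure, and it is exactly why the isomorphism is asserted only over $\bar{\mathbb{F}}_q$ rather than over $\mathbb{F}_q$. Under this substitution the roots $e_1,e_2,e_3$ are carried to $0$, $1$, and $\lambda:=(e_3-e_1)/(e_2-e_1)$, the base point (point at infinity) is preserved, and the equation becomes $v^2=u(u-1)(u-\lambda)$. Because $e_1,e_2,e_3$ are distinct, $\lambda$ is well defined and $\lambda\notin\{0,1\}$, and the resulting curve is again smooth; so it is an elliptic curve $\mathcal{E}_\lambda$ in Legendre form, isomorphic over $\bar{\mathbb{F}}_q$ to $\mathcal{E}$, as claimed.

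For the $j$-invariant I would simply expand $\mathcal{E}_\lambda:\,y^2=x(x-1)(x-\lambda)=x^3-(1+\lambda)x^2+\lambda x$, read off $a_1=a_3=a_6=0$, $a_2=-(1+\lambda)$, $a_4=\lambda$, and feed these into the standard quantities $b_2,b_4,b_6,b_8$, obtaining $c_4=b_2^2-24b_4=16(\lambda^2-\lambda+1)$ and $\Delta=16\lambda^2(\lambda-1)^2$ (note $\Delta\neq 0$ since $\lambda\neq 0,1$, reconfirming smoothness), whence $j=c_4^3/\Delta=2^8(\lambda^2-\lambda+1)^3/(\lambda^2(\lambda-1)^2)$. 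This final computation is purely mechanical and carries no conceptual content.

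The only subtlety — more a point to be careful about than a real obstacle — is the field of definition: the roots $e_i$ and the auxiliary square root $(e_2-e_1)^{1/2}$ generically do not lie in $\mathbb{F}_q$, so one cannot in general descend the isomorphism to $\mathbb{F}_q$, only to a bounded extension. It is also worth remarking that $\lambda$ depends on the ordering of the $e_i$ only up to the six-element orbit $\lambda\mapsto 1-\lambda,\ 1/\lambda,\ \lambda/(\lambda-1),\dots$, which is consistent with the manifest invariance of the displayed formula for $j(\mathcal{E}_\lambda)$ under that group and with $\mathcal{E}_\lambda\cong\mathcal{E}_{\lambda'}$ over $\bar{\mathbb{F}}_q$ exactly when $\lambda'$ lies in this orbit.
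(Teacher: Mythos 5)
Your proposal is correct and is essentially the standard argument from Silverman (complete the square, split the cubic over $\bar{\mathbb{F}}_q$, normalise two roots to $0$ and $1$ by an affine substitution requiring a square root of $e_2-e_1$, then compute $c_4$ and $\Delta$ for $j$); the paper itself gives no proof, citing \cite[Proposition III.1.7 (a)(b)]{bib: zeta6}, and your write-up matches that cited proof, with all the computations ($c_4=16(\lambda^2-\lambda+1)$, $\Delta=16\lambda^2(\lambda-1)^2$) checking out.
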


\bigskip
In order to facilitate our discussion on elliptic curves, I define a new term regarding the relation between $\lambda_i$ of elliptic curves $\mathcal{E}_{\lambda_i}$.

\bigskip
\begin{definition}
Let $\lambda_1$ and $\lambda_2$ be Legendre coefficients of $\mathcal{E}_{\lambda_1}$ and $\mathcal{E}_{\lambda_2}$ respectively.  Then $\lambda_1$ is \textit{Legendre equivalent} to $\lambda_2$ if
\[
\lambda_2\in\{\lambda_1,1-\lambda_1,\frac{1}{\lambda_1},\frac{1}{1-\lambda_1},\frac{\lambda_1-1}{\lambda_1},\frac{\lambda_1}{\lambda_1-1}\}.
\]
\end{definition}

\bigskip
It is not difficult to check that it is indeed an equivalence relation.  Then we have an important proposition telling us when two elliptic curves in Legendre forms is isomorphic to each other.

\bigskip
\begin{proposition}
\cite[Proposition 4(1.3)]{bib: zeta7} Let $\mathcal{E}_{\lambda_1}$ and $\mathcal{E}_{\lambda_2}$ be two elliptic curves in Legendre forms. Then $\mathcal{E}_{\lambda_1}$ is isomorphic to $\mathcal{E}_{\lambda_2}$ (over $\bar{\mathbb{F}}_q$)\footnote{In fact, to be more precise, $\mathcal{E}_{\lambda_1}$ is isomorphic to $\mathcal{E}_{\lambda_2}$ over $\mathbb{F}_q(j(\mathcal{E}_{\lambda_1}))=\mathbb{F}_q(j(\mathcal{E}_{\lambda_2}))$.} if and only if $\lambda_1$ is Legendre equivalent to $\lambda_2$.
\end{proposition}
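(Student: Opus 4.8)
The plan is to reduce the whole statement to a single property of the rational function
\[
j(\lambda)=2^8\,\frac{(\lambda^2-\lambda+1)^3}{\lambda^2(\lambda-1)^2}
\]
furnished by Proposition 2.3. Combining Proposition 2.3 with the standard fact \cite[Proposition III.1.4(b)]{bib: zeta6} that two elliptic curves over the algebraically closed field $\bar{\mathbb{F}}_q$ are isomorphic if and only if they have the same $j$-invariant, the assertion $\mathcal{E}_{\lambda_1}\cong\mathcal{E}_{\lambda_2}$ becomes $j(\lambda_1)=j(\lambda_2)$, so it suffices to show that $j(\lambda_1)=j(\lambda_2)$ holds precisely when $\lambda_1$ is Legendre equivalent to $\lambda_2$. (Note $2^8\neq 0$ in $\bar{\mathbb{F}}_q$ since $p$ is odd, so the formula is non-degenerate.)

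For the implication ``Legendre equivalent $\Rightarrow$ equal $j$-invariant'', I would first recognise the six-element set appearing in Definition 2.4 as the orbit of $\lambda_1$ under the group $G$ of fractional-linear maps of $\bar{\mathbb{F}}_q\setminus\{0,1\}$ generated by the two involutions $\lambda\mapsto 1-\lambda$ and $\lambda\mapsto\lambda^{-1}$; a quick computation shows $G\cong S_3$ and that the orbit of $\lambda_1$ is exactly that list. It then remains to check that $j(\lambda)$ is invariant under the two generators, which is a direct substitution, and hence is constant on the orbit. A more hands-on variant, which also pins down the field of definition, is to write the isomorphisms explicitly: $(x,y)\mapsto(1-x,\sqrt{-1}\,y)$ realises $\mathcal{E}_\lambda\xrightarrow{\sim}\mathcal{E}_{1-\lambda}$ and $(x,y)\mapsto(\lambda^{-1}x,\lambda^{-3/2}y)$ realises $\mathcal{E}_\lambda\xrightarrow{\sim}\mathcal{E}_{1/\lambda}$, and composing these produces an isomorphism onto $\mathcal{E}_{\lambda_2}$ for every $\lambda_2$ in the orbit.

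For the converse, assume $j(\lambda_1)=j(\lambda_2)$. Clearing denominators in the identity $j(\lambda_1)=j(\mu)$ turns it into $F_{\lambda_1}(\mu)=0$, where $F_{\lambda_1}(\mu)=\lambda_1^2(\lambda_1-1)^2(\mu^2-\mu+1)^3-(\lambda_1^2-\lambda_1+1)^3\mu^2(\mu-1)^2$ has degree $6$ in $\mu$. The crux is to show that the root set of $F_{\lambda_1}$ is exactly the orbit of $\lambda_1$: each orbit member is a root by the invariance established above, and a comparison of degrees (with simplicity of the roots in the generic case) shows there are no others, so $\lambda_2$ lies in the orbit and is Legendre equivalent to $\lambda_1$. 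The degenerate values of $j$ must be handled separately: at $j=0$ one has $\lambda_1^2-\lambda_1+1=0$, so $F_{\lambda_1}=\lambda_1^2(\lambda_1-1)^2(\mu^2-\mu+1)^3$ and the orbit shrinks to the two primitive sixth roots of unity; at $j=1728$ the orbit is $\{-1,2,\tfrac12\}$ and $F_{\lambda_1}$ is a constant times $(\mu+1)^2(\mu-2)^2(\mu-\tfrac12)^2$; in both cases the root set still equals the orbit, and one also keeps an eye on coincidences peculiar to small $p$ (for instance $-1$, $2$ and $\tfrac12$ all collapse modulo $3$).

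The main obstacle is precisely this polynomial bookkeeping in the converse direction: writing $F_{\lambda_1}$ down, verifying that it factors over the $S_3$-orbit of $\lambda_1$, and checking that the exceptional fibres $j=0$ and $j=1728$ (together with any characteristic-$p$ degeneracies) do not introduce spurious roots. Conceptually none of this is deep---it is the statement that $\lambda\mapsto j(\lambda)$ exhibits the $j$-line as the quotient of the $\lambda$-line by the $S_3$-action, i.e.\ the degree-$6$ covering $X(2)\to X(1)$---but it is the one genuinely computational step of the argument.
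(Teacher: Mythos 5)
The paper does not prove this proposition at all: it is quoted verbatim from Husem\"oller \cite[Proposition 4(1.3)]{bib: zeta7} (cf.\ also Silverman, Proposition III.1.7(c)), so there is no in-paper argument to compare against. Your proposal supplies the standard proof of the cited result, and it is essentially correct: reducing to the $j$-invariant via the criterion that curves over $\bar{\mathbb{F}}_q$ are isomorphic iff their $j$-invariants agree, checking that $j(\lambda)$ is invariant under the $S_3$-action generated by $\lambda\mapsto 1-\lambda$ and $\lambda\mapsto\lambda^{-1}$, and then arguing that the degree-$6$ polynomial $F_{\lambda_1}(\mu)$ (whose leading coefficient $\lambda_1^2(\lambda_1-1)^2$ is nonzero since $\lambda_1\neq 0,1$, and which does not vanish at $\mu=0,1$) has root set exactly the orbit of $\lambda_1$, with the ramified fibres $j=0$ and $j=1728$ (and their characteristic-$3$ collapse) treated separately. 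This is exactly the "$\lambda$-line modulo $S_3$ equals the $j$-line" argument, and the bookkeeping you describe goes through.

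One caveat: your explicit isomorphisms $(x,y)\mapsto(1-x,\sqrt{-1}\,y)$ and $(x,y)\mapsto(\lambda^{-1}x,\lambda^{-3/2}y)$ involve $\sqrt{-1}$ and $\lambda^{3/2}$, which need not lie in $\mathbb{F}_q(j(\mathcal{E}_{\lambda_1}))$, so they do not by themselves justify the footnote's refinement that the isomorphism is defined over $\mathbb{F}_q(j)$; they only give the isomorphism over $\bar{\mathbb{F}}_q$ asserted in the body of the statement. Since that refinement is part of what the paper later leans on (in the proof of Lemma 2.7, where $j_3\in\mathbb{F}_q$ is used to descend the isomorphism to $\mathbb{F}_q$), it is worth noting that your argument does not recover it and one must still appeal to the reference for that part.
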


\bigskip
The following lemma is also used in an essential place in the Lemma 2.7, which is a significant lemma in this paper.

\bigskip
\begin{lemma}
\cite[Lemma 2.3]{bib: zeta1} Let $\mathcal{E}_i/\mathbb{F}_q$ $(i=1,2)$ be elliptic curves such that $\mathcal{E}_1$ is not isomorphic to $\mathcal{E}_2$ and $|\mathcal{E}_1(\mathbb{F}_q)\cap\mathcal{E}_1[2]|=|\mathcal{E}_2(\mathbb{F}_q)\cap\mathcal{E}_2[2]|$, then there exists an elliptic curve $\mathcal{E}_3/\mathbb{F}_q$ such that $\mathcal{E}_3$ is isomorphic to $\mathcal{E}_2$ and $|R_{\mathcal{E}_1}\cap R_{\mathcal{E}_3}|=3$.
\end{lemma}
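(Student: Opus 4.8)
The plan is to pass from the curves $\mathcal{E}_i$ to their ramification sets $R_{\mathcal{E}_i}\subseteq\mathbb{P}^1$, where the problem becomes combinatorial. Two facts from Section~2 are the bridge: first, a curve is determined by its ramification set up to $\mathrm{PGL}(2,q)$, so if $\sigma(R_{\mathcal{E}})=R_{\mathcal{E}'}$ for some $\sigma\in\mathrm{PGL}(2,q)$ then $\mathcal{E}\cong\mathcal{E}'$; second, since the double cover $\mathcal{E}_i\to\mathbb{P}^1$ is defined over $\mathbb{F}_q$, it carries $\mathcal{E}_i[2]$ bijectively and Frobenius-equivariantly onto $R_{\mathcal{E}_i}$, so $|\mathcal{E}_i(\mathbb{F}_q)\cap\mathcal{E}_i[2]|$ equals the number of $\mathbb{F}_q$-rational points of $R_{\mathcal{E}_i}$. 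Write $n$ for this common number. It then suffices to find $\sigma\in\mathrm{PGL}(2,q)$ with $|R_{\mathcal{E}_1}\cap\sigma(R_{\mathcal{E}_2})|=3$: taking $\mathcal{E}_3$ to be the elliptic curve over $\mathbb{F}_q$ with $R_{\mathcal{E}_3}=\sigma(R_{\mathcal{E}_2})$ furnished by the construction in Section~2 (legitimate since $\sigma(R_{\mathcal{E}_2})$ is Frobenius-stable, has $4$ elements, and contains a rational point) gives $\mathcal{E}_3\cong\mathcal{E}_2$ by the first fact.

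Next I pin down the orbit structure. Since $p$ is odd, $|R_{\mathcal{E}_i}|=4$, and the origin of $\mathcal{E}_i$ lies in $\mathcal{E}_i[2]$, so its image is a rational point of $R_{\mathcal{E}_i}$ and $n\geq1$. Deleting all rational points from $R_{\mathcal{E}_i}$ leaves a Frobenius-stable set of size $4-n$ with no rational point; such a set cannot have size $1$ (a single Frobenius-fixed, hence rational, point) and here cannot have size $4$, so $n\in\{1,2,4\}$, and a set of this kind of size $2$ or $3$ is a single Frobenius orbit. Hence $R_{\mathcal{E}_1}$ and $R_{\mathcal{E}_2}$, sharing the value $n$, have the same orbit type. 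Now fix a rational point $A\in R_{\mathcal{E}_1}$ and a rational point $B\in R_{\mathcal{E}_2}$, and put $S_1=R_{\mathcal{E}_1}\setminus\{A\}$, $S_2=R_{\mathcal{E}_2}\setminus\{B\}$: these are Frobenius-stable $3$-element sets of the same orbit type (three fixed points when $n=4$, a fixed point plus a $2$-orbit when $n=2$, a single $3$-orbit when $n=1$).

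The crux is to produce $\sigma\in\mathrm{PGL}(2,q)$ with $\sigma(S_2)=S_1$. Since $\mathrm{PGL}(2,\bar{\mathbb{F}}_q)$ acts sharply $3$-transitively on $\mathbb{P}^1$, there is a unique $\sigma_0\in\mathrm{PGL}(2,\bar{\mathbb{F}}_q)$ realizing any prescribed bijection $S_2\to S_1$; I choose one compatible with the Frobenius actions, namely carrying a chosen generator of each orbit of $S_2$ to a chosen generator of the corresponding orbit of $S_1$ (possible since the orbit types agree). Using the identity $\sigma_0^{(q)}(x^q)=\sigma_0(x)^q$ one checks on these generators that the Frobenius conjugate $\sigma_0^{(q)}$ induces exactly the same bijection $S_2\to S_1$, whence $\sigma_0^{(q)}=\sigma_0$ by uniqueness, i.e.\ $\sigma_0\in\mathrm{PGL}(2,q)$; take $\sigma=\sigma_0$. (Alternatively, one can show by an orbit count that $\mathrm{PGL}(2,q)$ is transitive on $\mathbb{F}_q$-rational points, on size-$2$ Frobenius orbits, and on size-$3$ Frobenius orbits of $\mathbb{P}^1$.) This descent — forcing the connecting transformation to be rational over $\mathbb{F}_q$ rather than merely over $\bar{\mathbb{F}}_q$, and keeping track of the orbit types case by case — is the step I expect to require the most care.

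To finish, set $R_{\mathcal{E}_3}=\sigma(R_{\mathcal{E}_2})=\{\sigma(B)\}\cup S_1$ and let $\mathcal{E}_3$ be the associated curve, so $\mathcal{E}_3\cong\mathcal{E}_2$. Since $A\notin S_1$ and $\sigma(B)\notin\sigma(S_2)=S_1$, we get $R_{\mathcal{E}_1}\cap R_{\mathcal{E}_3}=S_1\cup(\{A\}\cap\{\sigma(B)\})$. If $\sigma(B)=A$ then $R_{\mathcal{E}_3}=R_{\mathcal{E}_1}$, forcing $\mathcal{E}_1\cong\mathcal{E}_3\cong\mathcal{E}_2$ and contradicting the hypothesis; hence $\sigma(B)\neq A$ and $|R_{\mathcal{E}_1}\cap R_{\mathcal{E}_3}|=|S_1|=3$, as required. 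Note that the non-isomorphism hypothesis is used exactly once, here at the end, and only to rule out the degenerate outcome $|R_{\mathcal{E}_1}\cap R_{\mathcal{E}_3}|=4$.
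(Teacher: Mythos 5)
The paper does not actually prove this lemma --- it is quoted from Xing's paper \cite[Lemma 2.3]{bib: zeta1} as a black box --- so your proposal is not competing with an in-text argument but supplying one, and as such it is essentially correct. Your route is the natural one given the dictionary set up in Section~2: translate everything to the branch loci $R_{\mathcal{E}_i}\subseteq\mathbb{P}^1$, observe that the common value $n=|\mathcal{E}_i(\mathbb{F}_q)\cap\mathcal{E}_i[2]|=|\mathbb{P}^1(\mathbb{F}_q)\cap R_{\mathcal{E}_i}|$ forces $R_{\mathcal{E}_1}$ and $R_{\mathcal{E}_2}$ to have the same Frobenius-orbit type, delete one rational point from each to get $3$-element Frobenius-stable sets $S_1,S_2$ of matching type, and then match them by a M\"obius transformation. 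The one step that genuinely needs an argument --- that the unique $\sigma_0\in\mathrm{PGL}(2,\bar{\mathbb{F}}_q)$ realizing a Frobenius-equivariant bijection $S_2\to S_1$ actually lies in $\mathrm{PGL}(2,q)$ --- you handle correctly by the descent computation $\sigma_0^{(q)}|_{S_2}=\sigma_0|_{S_2}$ plus sharp $3$-transitivity, and the endgame (using $\mathcal{E}_1\not\cong\mathcal{E}_2$ exactly once, to exclude $\sigma(B)=A$, i.e.\ $R_{\mathcal{E}_3}=R_{\mathcal{E}_1}$) is clean. This is in the same spirit as, but more uniformly organized than, the case-by-case normalizations the paper performs in its proof of Lemma~2.7, and it also covers the case $n=1$, which the paper never needs.

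One caveat you inherit from the paper rather than introduce: the Section~2 assertion that $R_{\mathcal{E}}$ determines $\mathcal{E}$ is only true up to quadratic twist, since $y^2=f(x)$ and $dy^2=f(x)$ have the same branch locus. So ``$R_{\mathcal{E}_3}=\sigma(R_{\mathcal{E}_2})$ with $\sigma\in\mathrm{PGL}(2,q)$'' by itself only yields $\mathcal{E}_3\cong\mathcal{E}_2$ over $\bar{\mathbb{F}}_q$. To get the $\mathbb{F}_q$-isomorphism that the later applications (which track $|\mathcal{E}_i(\mathbb{F}_q)|$) really require, you should define $\mathcal{E}_3$ not abstractly as ``the curve furnished by the converse construction'' but as the explicit transport of $\mathcal{E}_2$'s Weierstrass equation under $\sigma$, which is legitimate because $\sigma$ is defined over $\mathbb{F}_q$; this selects the correct twist. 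Likewise, in the degenerate case $R_{\mathcal{E}_3}=R_{\mathcal{E}_1}$ the cleanest contradiction is $j(\mathcal{E}_1)=j(\mathcal{E}_3)=j(\mathcal{E}_2)$ against non-isomorphism over $\bar{\mathbb{F}}_q$. With that small repair the argument is complete.
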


\bigskip
Now we have sufficient machinery to prove the Lemma 2.7, which will be used later in an important place. Note that $j_i$ represents the $j$-invariant of $\mathcal{E}_i$.

\bigskip
\begin{lemma}
Let $\mathcal{E}_i/\mathbb{F}_q$ $(i=1,2,3)$ be \textit{pairwise non-isomorphic} elliptic curves. Suppose $j_3\in \mathbb{F}_q$ and $\mathcal{E}_i/\mathbb{F}_q$ $(i=1,2,3)$ satisfy any one of the following sets of conditions:
\newcounter{count1}
\begin{list}{(\roman{count1})}{\usecounter{count1}}
\item $|\mathcal{E}_1(\mathbb{F}_q)\cap\mathcal{E}_1[2]|=|\mathcal{E}_2(\mathbb{F}_q)\cap\mathcal{E}_2[2]|=|\mathcal{E}_3(\mathbb{F}_q)\cap\mathcal{E}_3[2]|=2$;
\item $\displaystyle\frac{\lambda_2(1-\lambda_1)}{\lambda_2-\lambda_1}$ is Legendre equivalent to $\lambda_3$,
\end{list}
or
\newcounter{count2}
\begin{list}{(\roman{count2})}{\usecounter{count2}}
\item $|\mathcal{E}_1(\mathbb{F}_q)\cap\mathcal{E}_1[2]|=|\mathcal{E}_2(\mathbb{F}_q)\cap\mathcal{E}_2[2]|=|\mathcal{E}_3(\mathbb{F}_q)\cap\mathcal{E}_3[2]|=4$;
\item $\displaystyle\frac{\lambda_2}{\lambda_1}$ is Legendre equivalent to $\lambda_3$,
\end{list}
where $\lambda_1$, $\lambda_2$ and $\lambda_3$ are Legendre coefficients of $\mathcal{E}_1$, $\mathcal{E}_2$ and $\mathcal{E}_3$ respectively. Then there exists elliptic curves $\mathcal{E}'_i/\mathbb{F}_q$ $(i=1,2,3)$ such that $\mathcal{E}'_i$ is isomorphic to $\mathcal{E}_i$ over $\mathbb{F}_q$ ($\forall i=1,2,3$) and:
\newcounter{count3}
\begin{list}{(\roman{count3})}{\usecounter{count3}}
\item $|R_{\mathcal{E}'_i}\cap R_{\mathcal{E}'_j}|=3$ $\forall i\neq j$;
\item $|R_{\mathcal{E}'_1}\cap R_{\mathcal{E}'_2}\cap R_{\mathcal{E}'_3}|=2$.
\end{list}
\end{lemma}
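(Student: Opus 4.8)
The plan is to reformulate the statement in terms of the branch loci $R_{\mathcal{E}_i}\subseteq\mathbb{P}^1$ and the action of $\mathrm{PGL}(2,q)$, using the facts from Section~2: passing from a curve to an $\mathbb{F}_q$-isomorphic one corresponds to moving its branch locus by an element of $\mathrm{PGL}(2,q)$, and conversely the pull-back of $\mathcal{E}$ along any $\sigma\in\mathrm{PGL}(2,q)$ is $\mathbb{F}_q$-isomorphic to $\mathcal{E}$ and has branch locus $\sigma(R_\mathcal{E})$; the bijection $\mathcal{E}[2]\to R_\mathcal{E}$ gives $|\mathcal{E}(\mathbb{F}_q)\cap\mathcal{E}[2]|=|R_\mathcal{E}\cap\mathbb{P}^1(\mathbb{F}_q)|$; and (Propositions~2.3 and 2.5) the Legendre coefficient of $\mathcal{E}$ is a cross-ratio of the four points of $R_\mathcal{E}$, well defined up to Legendre equivalence. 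Thus under the first set of hypotheses $R_{\mathcal{E}_i}$ consists of two $\mathbb{F}_q$-rational points together with one Galois-conjugate pair in $\mathbb{F}_{q^2}$, while under the second set $R_{\mathcal{E}_i}$ consists of four $\mathbb{F}_q$-rational points. I would then observe that the desired conclusion forces a rigid configuration: if three $4$-point sets meet pairwise in $3$ points with common intersection a $2$-point set $T$, then each of them is $T\cup\{p_{ij},p_{ik}\}$ for three distinct points $p_{12},p_{13},p_{23}\notin T$, so all three lie in a common $5$-point set and each omits one $p$; Galois stability forces $T$ to be the shared conjugate pair (with the $p$'s rational) under the first hypotheses, while under the second all five points are rational. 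This is the structural reason the two cases look different.

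Given this, the construction runs as follows. Apply Lemma~2.6 to $\mathcal{E}_1,\mathcal{E}_2$ (allowed: they are non-isomorphic and, by hypothesis (i) of whichever set, $|\mathcal{E}_1(\mathbb{F}_q)\cap\mathcal{E}_1[2]|=|\mathcal{E}_2(\mathbb{F}_q)\cap\mathcal{E}_2[2]|$) to get $\mathcal{E}'_2$, isomorphic to $\mathcal{E}_2$ over $\mathbb{F}_q$, with $|R_{\mathcal{E}_1}\cap R_{\mathcal{E}'_2}|=3$; put $\mathcal{E}'_1:=\mathcal{E}_1$. Now $R_{\mathcal{E}'_1}\cup R_{\mathcal{E}'_2}$ is a $5$-point set; choosing $T$ as above (forced under the first hypotheses; under the second, chosen appropriately---see the computation below) determines the $p$'s and hence the $4$-point set $R^{\ast}:=T\cup\{p_{13},p_{23}\}$ that $R_{\mathcal{E}'_3}$ is required to equal. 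The set $R^{\ast}$ has the same splitting type over $\mathbb{F}_q$ as $R_{\mathcal{E}_3}$ by construction. If moreover $R^{\ast}$ and $R_{\mathcal{E}_3}$ have the same Legendre coefficient up to Legendre equivalence (equivalently the same $j$-invariant), I claim they are $\mathrm{PGL}(2,q)$-equivalent: any cross-ratio-preserving bijection between them respecting the rational structure and the conjugate pairs is automatically Frobenius-equivariant, hence rational, and such a bijection exists because two Galois-stable $4$-point sets of the same type and $j$-invariant have the same ``rationality-respecting'' cross-ratio---the norm-one representative of the common Legendre class in the first case, the whole class in the second. Taking $\mathcal{E}'_3$ to be the corresponding pull-back of $\mathcal{E}_3$ then gives $\mathcal{E}'_3$ isomorphic to $\mathcal{E}_3$ over $\mathbb{F}_q$ with $R_{\mathcal{E}'_3}=R^{\ast}$. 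Everything therefore reduces to computing the Legendre coefficient of $R^{\ast}$.

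For that I would normalise by $\mathrm{PGL}(2,q)$. Under the second set of hypotheses, send the three shared rational points to $0,1,\infty$ and take $T=\{0,\infty\}$; then $R_{\mathcal{E}'_1}=\{0,1,\lambda_1,\infty\}$, $R_{\mathcal{E}'_2}=\{0,1,\lambda_2,\infty\}$ and $R^{\ast}=\{0,\infty,\lambda_1,\lambda_2\}$, whose Legendre coefficient (via $x\mapsto x/\lambda_1$) is $\lambda_2/\lambda_1$; by hypothesis (ii) this is Legendre-equivalent to $\lambda_3$ (and this is precisely why $T=\{0,\infty\}$ is the right choice, the other two being ruled out). Under the first set, send $p_{12}$ and the two non-shared rational points to $0,1,\infty$, and read off each Legendre coefficient by sending the two rational branch points of the curve to $0,1$ and a fixed member of the common conjugate pair to $\infty$; a direct cross-ratio calculation then gives that the Legendre coefficient of $R^{\ast}$ is $\dfrac{\lambda_2(1-\lambda_1)}{\lambda_2-\lambda_1}$, which by hypothesis (ii) is Legendre-equivalent to $\lambda_3$. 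In both cases $R^{\ast}$ and $R_{\mathcal{E}_3}$ have the same $j$-invariant, so the previous paragraph produces $\mathcal{E}'_3$. Finally, conclusions (i) and (ii) of the lemma fall out of the explicit branch loci $R_{\mathcal{E}'_1}=T\cup\{p_{12},p_{13}\}$, $R_{\mathcal{E}'_2}=T\cup\{p_{12},p_{23}\}$, $R_{\mathcal{E}'_3}=T\cup\{p_{13},p_{23}\}$ together with the distinctness of $p_{12},p_{13},p_{23}$ and the fact that they avoid $T$---all of which hold by Lemma~2.6 and the non-degeneracy implicit in hypothesis (ii) (e.g.\ $\lambda_1\neq\lambda_2$).

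I expect the main obstacle to be this last cross-ratio computation and its bookkeeping: one must carry it out with the correct representatives of the Legendre classes---in the first case with a consistent choice of which member of the shared conjugate pair plays the role of $\infty$ across all three curves (the ``trace-one'' normalisation)---so that the answer is literally $\frac{\lambda_2(1-\lambda_1)}{\lambda_2-\lambda_1}$ and not some other member of the class. A subsidiary issue is justifying the $\mathrm{PGL}(2,q)$-equivalence (rather than merely $\mathrm{PGL}(2,\overline{\mathbb{F}}_q)$-equivalence) invoked for $\mathcal{E}'_3$, and checking that the resulting pull-back is genuinely isomorphic to $\mathcal{E}_3$ over $\mathbb{F}_q$ and not to a quadratic twist; both points come down to the comparison map being Frobenius-equivariant, which is where the hypothesis $j_3\in\mathbb{F}_q$ and the matching of splitting types are used.
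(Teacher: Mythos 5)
Your proposal follows essentially the same route as the paper's proof: apply Lemma 2.6 to arrange $|R_{\mathcal{E}_1}\cap R_{\mathcal{E}_2}|=3$, build the third branch locus out of the resulting five points (keeping the shared conjugate pair in the first case, dropping one rational point in the second), and check by a cross-ratio computation that its Legendre coefficient is $\frac{\lambda_2(1-\lambda_1)}{\lambda_2-\lambda_1}$ resp.\ $\frac{\lambda_2}{\lambda_1}$, so that Proposition 2.5 together with $j_3\in\mathbb{F}_q$ gives $\mathcal{E}'_3\cong\mathcal{E}_3$ over $\mathbb{F}_q$. The only real difference is that you are more explicit about the Frobenius-equivariance and quadratic-twist subtleties, which the paper dispatches by citing the footnote to Proposition 2.5.
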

\begin{proof}
By Lemma 2.6, we may assume without loss of generality that $\mathcal{E}_1$ and $\mathcal{E}_2$ possess the property $|R_{\mathcal{E}_1}\cap R_{\mathcal{E}_2}|=3$. We may also assume:
\[
\mathcal{E}_1:y^2=(x-p_1)(x-p_2)(x-p_3) \quad\mathrm{and}\quad \mathcal{E}_2:y^2=(x-p_1)(x-p_2)(x-p_4).
\]
Then we can consider $R_{\mathcal{E}_1}=\{\mathcal{O},[p_1:1],[p_2:1],[p_3:1]\}$ and $R_{\mathcal{E}_2}=\{\mathcal{O},[p_1:1],[p_2:1],[p_4:1]\}$, where $\mathcal{O}$ is the point of infinity. If we change $\mathcal{E}_1$ and $\mathcal{E}_2$ into Legendre forms, we may assume $\lambda_1=\displaystyle\frac{p_3-p_1}{p_2-p_1}$ and $\lambda_2=\displaystyle\frac{p_4-p_1}{p_2-p_1}$. Now, it is sufficient to find out an elliptic curve $\mathcal{E}'_3/\mathbb{F}_q$ which is isomorphic to $\mathcal{E}_3$ over $\mathbb{F}_q$ and satisfies the conditions:
\newcounter{count4}
\begin{list}{(\roman{count4})}{\usecounter{count4}}
\item $|R_{\mathcal{E}'_3}\cap R_{\mathcal{E}_j}|=3$ for $j=1,2$;
\item $|R_{\mathcal{E}_1}\cap R_{\mathcal{E}_2}\cap R_{\mathcal{E}'_3}|=2$.
\end{list}

Let $\phi_i$ be the covering $\mathcal{E}_i\rightarrow \mathbb{P}^1$.  Then $\phi_i$ maps $\mathcal{E}_i[2]$ onto $R_{\mathcal{E}_i}$ bijectively, and thus $\phi_i(\mathcal{E}_i(\mathbb{F}_q)\cap\mathcal{E}_i[2])=\mathbb{P}^1(\mathbb{F}_q)\cap R_{\mathcal{E}_i}$.  This implies $|\mathbb{P}^1(\mathbb{F}_q)\cap R_{\mathcal{E}_1}|=|\mathbb{P}^1(\mathbb{F}_q)\cap R_{\mathcal{E}_2}|=|\mathbb{P}^1(\mathbb{F}_q)\cap R_{\mathcal{E}_3}|$.

\bigskip
\begin{case}$|\mathbb{P}^1(\mathbb{F}_q)\cap R_{\mathcal{E}_i}|=2$

In this case, $R_{\mathcal{E}_i}$ contains two distinct rational points and two other conjugate points.  Without loss of generality, let $[p_1:1]$ and $[p_2:1]$ be the conjugate points.  Now, we consider $R_{\mathcal{E}'_3}=\{[p_1:1],[p_2:1],[p_3:1],[p_4:1]\}$. Then we have an elliptic curve $\mathcal{E}'_3$, which is defined over $\mathbb{F}_q$ (since $p_1$ and $p_2$ are conjugate pairs and $p_3,p_4\in \mathbb{F}_q$), with the desired conditions. It suffices to show that $\mathcal{E}'_3$ is isomorphic to $\mathcal{E}_3$ over $\mathbb{F}_q$.

Let $\sigma=
\bigg[\bigg(\begin{array}{cc}
0&1\\
1&-p_3
\end{array}\bigg)\bigg]
\in\mathrm{PGL}(2,q)$, then $\displaystyle\sigma(R_{\mathcal{E}'_3})=\{\mathcal{O},[\frac{1}{p_1-p_3}:1],[\frac{1}{p_2-p_3}:1],[\frac{1}{p_4-p_3}:1]\}$. The new elliptic curve $\sigma(\mathcal{E}'_3)$ determined by $\sigma(R_{\mathcal{E}'_3})$ is therefore isomorphic to $\mathcal{E}'_3$.  Since a Legendre coefficient of $\sigma(\mathcal{E}'_3)$ is
\begin{eqnarray*}
\lambda_\sigma&=&\frac{\frac{1}{p_4-p_3}-\frac{1}{p_1-p_3}}{\frac{1}{p_2-p_3}-\frac{1}{p_1-p_3}} \\
&=&\frac{\frac{p_1-p_4}{(p_4-p_3)(p_1-p_3)}}{\frac{p_1-p_2}{(p_2-p_3)(p_1-p_3)}} \\
&=&\frac{\frac{p_4-p_1}{p_2-p_1}(1-\frac{p_3-p_1}{p_2-p_1})}{\frac{p_4-p_1}{p_2-p_1}-\frac{p_3-p_1}{p_2-p_1}} \\
&=&\frac{\lambda_2(1-\lambda_1)}{\lambda_2-\lambda_1}
\end{eqnarray*}
which is Legendre equivalent to $\lambda_3$. By Proposition 2.5, this implies $\sigma(\mathcal{E}'_3)\cong \mathcal{E}_3$ (over $\mathbb{F}_q(j_3)$) and in turn $\mathcal{E}'_3\cong \mathcal{E}_3$ (over $\mathbb{F}_q(j_3)$). Since, however, $j_3\in \mathbb{F}_q$, $\mathcal{E}'_3$ is in fact isomorphic to $\mathcal{E}_3$ over $\mathbb{F}_q$.
\end{case}

\bigskip
\begin{case}$|\mathbb{P}^1(\mathbb{F}_q)\cap R_{\mathcal{E}_i}|=4$

In this case, all points in $R_{\mathcal{E}_i}$ are rational points. Now, we consider $R_{\mathcal{E}'_3}=\{\mathcal{O},[p_1:1],[p_3:1],[p_4:1]\}$. Again, we get an elliptic curve $\mathcal{E}'_3$, which is trivially defined over $\mathbb{F}_q$, satisfying the required conditions. Clearly, if we change $\mathcal{E}'_3$ into Legendre form, we may have $\displaystyle\lambda'_3=\frac{p_4-p_1}{p_3-p_1}=\frac{\frac{p_4-p_1}{p_2-p_1}}{\frac{p_3-p_1}{p_2-p_1}}=\frac{\lambda_2}{\lambda_1}$, which is Legendre equivalent to $\lambda_3$. We immediately get the desirable result $\mathcal{E}'_3\cong \mathcal{E}_3$ (over $\mathbb{F}_q$) similarly as in case 1.
\end{case}
\end{proof}

\section{Construction of a curve of genus 3}
\begin{proposition}
Let $p$ be an odd prime.  Let $\mathcal{E}_i/\mathbb{F}_q$ $(i=1,2,3)$ be \textit{pairwise non-isomorphic} elliptic curves and $R_{\mathcal{E}_i}$ $(i=1,2,3)$ be the sets of ramified points of $\mathbb{P}^1$ for $\mathcal{E}_i/\mathbb{F}_q\rightarrow\mathbb{P}^1/\mathbb{F}_q$ satisfy the following conditions:
\newcounter{count5}
\begin{list}{(\roman{count5})}{\usecounter{count5}}
\item $|R_{\mathcal{E}'_i}\cap R_{\mathcal{E}'_j}|=3$ $\forall i\neq j$;
\item $|R_{\mathcal{E}_1}\cap R_{\mathcal{E}_2}\cap R_{\mathcal{E}_3}|=2$;
\end{list}
Then there exists a curve $\chi/\mathbb{F}_q$ of genus 3 such that the Jacobian $\mathcal{J}_\chi$ is isogenous to $\mathcal{E}_1\times\mathcal{E}_2\times\mathcal{E}_3$.
\end{proposition}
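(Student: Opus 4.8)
The plan is to realize $\chi$ as a $(\mathbb{Z}/2\mathbb{Z})^3$-cover of $\mathbb{P}^1$ obtained by ``superimposing'' the three double covers $\mathcal{E}_i\to\mathbb{P}^1$, and then to extract its genus and the isogeny class of $\mathcal{J}_\chi$ from the standard decomposition of the Jacobian of an abelian cover.

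First I would unwind the combinatorics. Since $p$ is odd, $|R_{\mathcal{E}_i}|=4$; together with $|R_{\mathcal{E}_i}\cap R_{\mathcal{E}_j}|=3$ and $|R_{\mathcal{E}_1}\cap R_{\mathcal{E}_2}\cap R_{\mathcal{E}_3}|=2$ this forces $R_{\mathcal{E}_1}\cup R_{\mathcal{E}_2}\cup R_{\mathcal{E}_3}$ to consist of exactly five points $a,b,c,d,e\in\mathbb{P}^1$, with $\{a,b\}$ the triple intersection and, after relabelling, $R_{\mathcal{E}_1}=\{a,b,c,d\}$, $R_{\mathcal{E}_2}=\{a,b,c,e\}$, $R_{\mathcal{E}_3}=\{a,b,d,e\}$. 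As each $R_{\mathcal{E}_i}$ and the triple intersection are Frobenius-stable, a short argument (Frobenius permutes the $2$-element sets $\{c,d\}$, $\{c,e\}$, $\{d,e\}$ compatibly, so it fixes each of $c,d,e$) shows $c,d,e$ are $\mathbb{F}_q$-rational and $\{a,b\}$ is Frobenius-stable. Choosing an affine coordinate $x$ on $\mathbb{P}^1$ (moving $[1:0]$ off these five points by an element of $\mathrm{PGL}(2,q)$ if necessary), the polynomials $f_1=(x-a)(x-b)(x-c)(x-d)$, $f_2=(x-a)(x-b)(x-c)(x-e)$, $f_3=(x-a)(x-b)(x-d)(x-e)$ then lie in $\mathbb{F}_q[x]$, and for each $i$ the double cover $y^2=f_i(x)$ is an elliptic curve over $\mathbb{F}_q$ (the two points over $x=\infty$ are rational) with ramification locus $R_{\mathcal{E}_i}$; since for odd $p$ an elliptic curve is determined by its ramification locus, this curve is isomorphic over $\mathbb{F}_q$ to $\mathcal{E}_i$.

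Now let $\chi$ be the smooth projective model of $K:=\mathbb{F}_q(x,\sqrt{f_1},\sqrt{f_2},\sqrt{f_3})$. The $\mathbb{F}_2$-vectors recording the branch points of $f_1,f_2,f_3$ among $\{a,b,c,d,e\}$ are linearly independent, so $f_1,f_2,f_3$ are independent in $\mathbb{F}_q(x)^\ast/(\mathbb{F}_q(x)^\ast)^2$; hence $K/\mathbb{F}_q(x)$ is Galois with group $G\cong(\mathbb{Z}/2\mathbb{Z})^3$, and because every nontrivial square class in $\langle f_1,f_2,f_3\rangle$ has nonempty branch locus, $\mathbb{F}_q$ is algebraically closed in $K$, so $\chi$ is a geometrically irreducible curve over $\mathbb{F}_q$. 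The seven index-$2$ subgroups $H<G$ correspond to the seven nontrivial classes $f_1,f_2,f_3,f_1f_2,f_1f_3,f_2f_3,f_1f_2f_3$, and the corresponding quadratic quotients $\chi/H\to\mathbb{P}^1$ have branch loci $R_{\mathcal{E}_1},R_{\mathcal{E}_2},R_{\mathcal{E}_3}$ (four points, genus $1$) and the four two-element sets $R_{\mathcal{E}_1}\triangle R_{\mathcal{E}_2}=\{d,e\}$, $R_{\mathcal{E}_1}\triangle R_{\mathcal{E}_3}=\{c,e\}$, $R_{\mathcal{E}_2}\triangle R_{\mathcal{E}_3}=\{c,d\}$, $R_{\mathcal{E}_1}\triangle R_{\mathcal{E}_2}\triangle R_{\mathcal{E}_3}=\{a,b\}$ (genus $0$); here the three genus-$1$ quotients are, over $\mathbb{F}_q$, exactly $\mathcal{E}_1,\mathcal{E}_2,\mathcal{E}_3$.

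Finally, I would invoke the character decomposition of $H^0(\chi,\Omega^1)$ under $G$ (equivalently, the Kani--Rosen idempotent relation for $(\mathbb{Z}/2\mathbb{Z})^3$): since $\chi/G=\mathbb{P}^1$ carries no holomorphic differentials, $g(\chi)=\sum_{H}g(\chi/H)=1+1+1+0+0+0+0=3$ (confirmed directly by Riemann--Hurwitz, as $\chi\to\mathbb{P}^1$ is tamely ramified of degree $8$ over exactly the five points with inertia of order $2$ at each, so $2g(\chi)-2=8(-2)+5\cdot 4$), and the same decomposition yields an isogeny $\mathcal{J}_\chi\sim\prod_{H}\mathcal{J}_{\chi/H}\sim\mathcal{E}_1\times\mathcal{E}_2\times\mathcal{E}_3$, the four genus-$0$ factors being trivial; it is defined over $\mathbb{F}_q$ because the $G$-action on $\chi$ and the correspondences realizing the decomposition are. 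The step I expect to be the real content is the middle one: unlike the genus-$2$ situation, the three elliptic curves cannot be the three quadratic subcovers of a single $(\mathbb{Z}/2\mathbb{Z})^2$-cover — that would require $R_{\mathcal{E}_3}=R_{\mathcal{E}_1}\triangle R_{\mathcal{E}_2}$, impossible once $|R_{\mathcal{E}_1}\cap R_{\mathcal{E}_2}|=3$ — so one is forced up to the $(\mathbb{Z}/2\mathbb{Z})^3$-cover and must verify that precisely three of its seven quadratic subcovers have genus $1$ while the other four collapse to $\mathbb{P}^1$; once that bookkeeping is in place, the genus count and the isogeny are formal consequences of the general theory.
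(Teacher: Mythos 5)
Your overall strategy is the same as the paper's: build $\chi$ as the $(\mathbb{Z}/2\mathbb{Z})^3$-cover of $\mathbb{P}^1$ whose function field is the compositum $\mathbb{F}_q(\mathcal{E}_1)\cdot\mathbb{F}_q(\mathcal{E}_2)\cdot\mathbb{F}_q(\mathcal{E}_3)$, get $g(\chi)=3$ from Riemann--Hurwitz over the five branch points (the paper uses Abhyankar's lemma to see each has ramification index $2$, exactly as you do), and then apply the Kani--Rosen idempotent relation to the three index-$2$ subgroups fixing the $\mathbb{F}_q(\mathcal{E}_i)$ to obtain $\mathcal{J}_\chi\sim\mathcal{E}_1\times\mathcal{E}_2\times\mathcal{E}_3$. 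One place where you genuinely improve on the paper: the claim that the compositum has degree $8$ (equivalently $\mathbb{F}_q(\mathcal{E}_3)\not\subseteq\mathbb{F}_q(\mathcal{E}_1)\cdot\mathbb{F}_q(\mathcal{E}_2)$) is dispatched by the $\mathbb{F}_2$-linear independence of the branch divisors $R_{\mathcal{E}_1},R_{\mathcal{E}_2},R_{\mathcal{E}_3}$ in your write-up, whereas the paper proves it (Lemma 3.2) by a page-long coefficient comparison in the basis $\{1,y_1,y_2,y_1y_2\}$; your argument is cleaner, though note it uses the intersection hypotheses, while the paper's lemma is stated for arbitrary pairwise non-isomorphic curves.

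There is one genuine gap, at the step where you reconstruct the $\mathcal{E}_i$ from their branch loci. You set $f_i$ to be the \emph{monic} quartic vanishing on $R_{\mathcal{E}_i}$ and assert that $y^2=f_i(x)$ is isomorphic to $\mathcal{E}_i$ \emph{over $\mathbb{F}_q$} because ``an elliptic curve is determined by its ramification locus.'' That determination only holds up to $\bar{\mathbb{F}}_q$-isomorphism: the double covers $y^2=f_i$ and $y^2=cf_i$ have identical branch loci but are quadratic twists when $c$ is a nonsquare, and quadratic twists are in general not $\mathbb{F}_q$-isogenous (their traces of Frobenius differ in sign), so landing on the wrong twist would break the conclusion, which is an isogeny over $\mathbb{F}_q$. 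The fix is immediate and is in effect what the paper does: take $\mathbb{F}_q(\mathcal{E}_i)$ itself as the given quadratic extension of $\mathbb{F}_q(x)$, i.e.\ adjoin $\sqrt{c_if_i}$ for the constant $c_i$ actually realizing $\mathcal{E}_i$, and form the compositum of these; the branch loci, the independence of the square classes, the genus count, and the Kani--Rosen decomposition are all unaffected, and the three genus-$1$ quotients are then literally the $\mathcal{E}_i$.
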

\begin{proof}
$\mathbb{F}_q(\mathbb{P}^1)=\mathbb{F}_q(t)$ can be seen as the subfield of $\mathbb{F}_q(\mathcal{E}_i)$ $(i=1,2,3)$.  Clearly, for any $i\neq j$, $\mathbb{F}_q(\mathcal{E}_i)\cap\mathbb{F}_q(\mathcal{E}_j)=\mathbb{F}_q(\mathbb{P}^1)$ as $\mathbb{F}_q(\mathcal{E}_i) (i=1,2,3)$ is a quadratic extension of $\mathbb{F}_q(t)$.  Then we have a composite field $\mathbb{F}_q(\mathcal{E}_1)\cdot\mathbb{F}_q(\mathcal{E}_2)$ which is an abelian extension of $\mathbb{F}_q(t)$ of degree 4.  Now if $\mathbb{F}_q(\mathcal{E}_3)$ is not contained in $\mathbb{F}_q(\mathcal{E}_1)\cdot\mathbb{F}_q(\mathcal{E}_2)$, we will get a new composite field $\mathbb{F}_q(\mathcal{E}_1)\cdot\mathbb{F}_q(\mathcal{E}_2)\cdot\mathbb{F}_q(\mathcal{E}_3)$ which is an abelian extension of $\mathbb{F}_q(t)$ of degree 8.  Let $\mathcal{Y}/\mathbb{F}_q$ and $\chi/\mathbb{F}_q$ be (smooth) algebraic curves corresponding to the composite fields $\mathbb{F}_q(\mathcal{E}_1)\cdot\mathbb{F}_q(\mathcal{E}_2)$ and $\mathbb{F}_q(\mathcal{E}_1)\cdot\mathbb{F}_q(\mathcal{E}_2)\cdot\mathbb{F}_q(\mathcal{E}_3)$ respectively.

\begin{displaymath}
\xymatrix
{&&\chi\ar[ld]\ar[rrdd]&& \\
&\mathcal{Y}\ar[ld]\ar[rd]&&& \\
\mathcal{E}_1\ar[rrd]&&\mathcal{E}_2\ar[d]&&\mathcal{E}_3\ar[lld] \\
&&\mathbb{P}^1&&
}
\end{displaymath}
\bigskip
\begin{displaymath}
\xymatrix
{
&&\mathbb{F}_q(\chi)\ar[ld]\ar[rrdd]&& \\
&\mathbb{F}_q(\mathcal{Y})\ar[ld]\ar[rd]&&& \\
\mathbb{F}_q(\mathcal{E}_1)\ar[rrd]&&\mathbb{F}_q(\mathcal{E}_2)\ar[d]&&\mathbb{F}_q(\mathcal{E}_3)\ar[lld] \\
&&\mathbb{F}_q(\mathbb{P}^1)&&
}
\end{displaymath}

\bigskip
The set of ramified points of $\mathbb{P}^1$ for the covering $\chi\rightarrow\mathbb{P}^1$ is the union $R_{\mathcal{E}_1}\cup R_{\mathcal{E}_2}\cup R_{\mathcal{E}_3}$.  By Abhyankar's Lemma \cite[Proposition III.8.9]{bib: zeta8}, we know that every point of this set has ramification index 2.  In addition,
\begin{eqnarray*}
|R_{\mathcal{E}_1}\cup R_{\mathcal{E}_2}\cup R_{\mathcal{E}_3}|&=&|R_{\mathcal{E}_1}|+|R_{\mathcal{E}_2}|+|R_{\mathcal{E}_3}|-|R_{\mathcal{E}_1}\cap R_{\mathcal{E}_2}|-|R_{\mathcal{E}_2}\cap R_{\mathcal{E}_3}|-|R_{\mathcal{E}_3}\cap R_{\mathcal{E}_1}| \\
& & +|R_{\mathcal{E}_1}\cap R_{\mathcal{E}_2}\cap R_{\mathcal{E}_3}| \\
& = & 4+4+4-3-3-3+2\\
& = & 5.
\end{eqnarray*}
Therefore by the Hurwitz formula \cite[Theorem II.5.9]{bib: zeta6}, we get
\begin{eqnarray*}
2\textrm{genus}(\chi)-2& = &(2\textrm{genus}(\mathbb{P}^1)-2)(\textrm{deg}(\phi))+\sum_{P\in\chi}(e_\phi(P)-1) \\
& = & (2\times 0-2)\times 8+4\times 5\times (2-1) \\
& = & 4.
\end{eqnarray*}
(Note that each ramified point on $\mathbb{P}^1$ has four pre-images in $\chi$).  We thus have $\textrm{genus}(\chi)=3$.  Now it only remains to prove that $\mathbb{F}_q(\mathcal{E}_3)\not\leq\mathbb{F}_q(\mathcal{E}_1)\cdot\mathbb{F}_q(\mathcal{E}_2)$ and the Jacobian of $\chi$ is isogenous to $\mathcal{E}_1\times\mathcal{E}_2\times\mathcal{E}_3$.  The first statement can be verified by Lemma 3.2 and the second one can be proved by a special case of Theorem 3.3.
\end{proof}

\bigskip
\begin{lemma}
Let $p$ be an odd prime and $\mathcal{E}_i/\mathbb{F}_q$ $(i=1,2,3)$ be \textit{pairwise non-isomorphic} elliptic curves.  Then $\mathbb{F}_q(\mathcal{E}_3)\not\leq\mathbb{F}_q(\mathcal{E}_1)\cdot\mathbb{F}_q(\mathcal{E}_2)$. Here $\mathbb{F}_q(\mathcal{E}_i)$ $(i=1,2,3)$ is viewed as a fixed degree $2$ extension of $\mathbb{F}_q(\mathbb{P}^1)$.
\end{lemma}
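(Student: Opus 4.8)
\bigskip\noindent\emph{Proposed approach.} The plan is to translate everything into Kummer theory over $\mathbb{F}_q(t)=\mathbb{F}_q(\mathbb{P}^1)$ and then reduce to a single Hurwitz computation. Since $p$ is odd, I would write the fixed covering $\mathcal{E}_i\to\mathbb{P}^1$ as $y^2=f_i(t)$, so that $\mathbb{F}_q(\mathcal{E}_i)=\mathbb{F}_q(t)\bigl(\sqrt{f_i}\bigr)$ with $f_i$ well defined modulo squares in $\mathbb{F}_q(t)^{\times}$; the points of $\mathbb{P}^1$ where $f_i$ has odd valuation (together with $\infty$ when $\deg f_i$ is odd) are exactly $R_{\mathcal{E}_i}$.

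First I would observe that $\mathbb{F}_q(\mathcal{E}_1)\neq\mathbb{F}_q(\mathcal{E}_2)$ inside a fixed algebraic closure of $\mathbb{F}_q(t)$: if they were equal then $f_1\equiv f_2$ modulo squares, hence $R_{\mathcal{E}_1}=R_{\mathcal{E}_2}$, and since (for odd $p$) an elliptic curve is determined by its set of ramified points this would force $\mathcal{E}_1\cong\mathcal{E}_2$, contrary to hypothesis. Therefore $L:=\mathbb{F}_q(\mathcal{E}_1)\cdot\mathbb{F}_q(\mathcal{E}_2)=\mathbb{F}_q(t)\bigl(\sqrt{f_1},\sqrt{f_2}\bigr)$ is a $(\mathbb{Z}/2\mathbb{Z})^{2}$--extension of $\mathbb{F}_q(t)$ and so has exactly three intermediate quadratic fields, namely $\mathbb{F}_q(\mathcal{E}_1)$, $\mathbb{F}_q(\mathcal{E}_2)$ and $M:=\mathbb{F}_q(t)\bigl(\sqrt{f_1f_2}\bigr)$. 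Hence $\mathbb{F}_q(\mathcal{E}_3)\leq L$ would force $\mathbb{F}_q(\mathcal{E}_3)$ to equal one of these three; it cannot equal $\mathbb{F}_q(\mathcal{E}_1)$ or $\mathbb{F}_q(\mathcal{E}_2)$ by the same ``$R_{\mathcal{E}}$ determines $\mathcal{E}$'' argument applied to $\mathcal{E}_3$, so the whole statement reduces to proving $M\neq\mathbb{F}_q(\mathcal{E}_3)$.

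Next I would identify $M$ geometrically: it is the function field of the double cover of $\mathbb{P}^1$ ramified exactly over the symmetric difference $R_{\mathcal{E}_1}\triangle R_{\mathcal{E}_2}$ (common ramification points cancel in $f_1f_2$ modulo squares), all ramification indices being $2$; so by Hurwitz $\mathrm{genus}(M)=\tfrac12\,|R_{\mathcal{E}_1}\triangle R_{\mathcal{E}_2}|-1=3-|R_{\mathcal{E}_1}\cap R_{\mathcal{E}_2}|$. Since $\mathbb{F}_q(\mathcal{E}_3)$ has genus $1$, and $M=\mathbb{F}_q(\mathcal{E}_3)$ would also force $R_{\mathcal{E}_3}=R_{\mathcal{E}_1}\triangle R_{\mathcal{E}_2}$, this genus count already disposes of $M=\mathbb{F}_q(\mathcal{E}_3)$ whenever $|R_{\mathcal{E}_1}\cap R_{\mathcal{E}_2}|\in\{0,1,3\}$ (the value $4$ being excluded since it would mean $\mathcal{E}_1\cong\mathcal{E}_2$).

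The main obstacle is the remaining case $|R_{\mathcal{E}_1}\cap R_{\mathcal{E}_2}|=2$, where $M$ again has genus $1$; there one must bring in finer data than the three branch loci — the Frobenius action on $R_{\mathcal{E}_1}\cup R_{\mathcal{E}_2}$ and the constants relating $f_1,f_2,f_3$ modulo squares — to contradict $R_{\mathcal{E}_3}=R_{\mathcal{E}_1}\triangle R_{\mathcal{E}_2}$, or else to invoke the stronger hypothesis under which the lemma is applied in Proposition 3.1, namely $|R_{\mathcal{E}_i}\cap R_{\mathcal{E}_j}|=3$, which puts $M$ in genus $0$ and closes the argument at once. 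That last case is where I would expect essentially all of the work to lie.
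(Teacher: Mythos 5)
Your Kummer--theoretic framing is sound and is really the same skeleton as the paper's argument in different clothing: the paper writes $y_3=c_0+c_1y_1+c_2y_2+c_3y_1y_2$ and shows by squaring that all but one coefficient vanishes, which is exactly your statement that a quadratic subfield of the biquadratic extension $L$ must be one of $\mathbb{F}_q(t)(\sqrt{f_1})$, $\mathbb{F}_q(t)(\sqrt{f_2})$, $\mathbb{F}_q(t)(\sqrt{f_1f_2})$. Your elimination of the first two subfields is fine. The genuine gap is the one you flag yourself: the case $|R_{\mathcal{E}_1}\cap R_{\mathcal{E}_2}|=2$, where $M=\mathbb{F}_q(t)(\sqrt{f_1f_2})$ has genus $1$ and your genus count says nothing. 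Your fallback --- invoking the hypothesis $|R_{\mathcal{E}_i}\cap R_{\mathcal{E}_j}|=3$ from Proposition 3.1 --- proves only a weaker statement than the lemma, which is asserted for arbitrary pairwise non-isomorphic elliptic curves; and the alternative of ``finer data'' is left entirely unexecuted, so as written the proof is incomplete.

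The missing observation is cheap, and you already have every ingredient for it. The fixed covering $\mathcal{E}_i\to\mathbb{P}^1$ is the Weierstrass $x$-coordinate map, which is ramified at the origin of the group law; hence $\infty=[1:0]$ lies in $R_{\mathcal{E}_i}$ for \emph{every} $i$, so in particular $\infty\in R_{\mathcal{E}_1}\cap R_{\mathcal{E}_2}$. By your own computation the branch locus of $M$ is the symmetric difference $R_{\mathcal{E}_1}\triangle R_{\mathcal{E}_2}$, which therefore does \emph{not} contain $\infty$, whereas the branch locus of the fixed embedding of $\mathbb{F}_q(\mathcal{E}_3)$ is $R_{\mathcal{E}_3}\ni\infty$. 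Two quadratic extensions of $\mathbb{F}_q(t)$ in odd characteristic with different branch loci are distinct, so $M\neq\mathbb{F}_q(\mathcal{E}_3)$ in every case, with no genus computation needed. Equivalently: $M=\mathbb{F}_q(\mathcal{E}_3)$ would force $f_1f_2f_3$ to be a square in $\mathbb{F}_q(t)$, which is impossible because its degree is $9$, odd. This parity-of-degree fact is precisely what the paper uses to kill the coefficient $c_3$ of $y_1y_2$; once you add it, your argument closes in all cases and is, if anything, cleaner than the paper's coefficient chase.
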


\smallskip
\begin{rem}
An elementary proof of lemma 3.2 is included here for convenience. However it should be noted that this lemma indeed follows from theorem 3.3.
\end{rem}

\begin{proof}
Since $\mathrm{Char}(\mathbb{F}_q)\neq 2$, we can assume $\mathcal{E}_i/\mathbb{F}_q$ is given by the equation $y^2=f_i(x)$, where $f_i(x)$ is a cubic polynomial in $\mathbb{F}_q[x]$. Let's standardize $f_i$ by $f_i(x)=4x^3+b_{i,2}x^2+2b_{i,4}x+b_{i,6}$. Now let $K=\mathbb{F}_q(x)$. Then by considering $y^2-f_i(x)\in K[y]$, we may have
\begin{eqnarray*}
\mathbb{F}_q(\mathcal{E}_i)&=&\mathrm{Frac}(\mathbb{F}_q[x,y]/(y^2-f_i(x))) \\
&=&\mathbb{F}_q(x)(y_i) \\
&=&K(y_i),
\end{eqnarray*}
where $y_i$ is a root of the polynomial $y^2-f_i(x)$ in $K[y]$.  It suffices to prove $K(y_3)\not\leq K(y_1)\cdot K(y_2)$ instead. \\

Suppose the contrary is true, then $y_3$ is a linear combination of the basis $\{1,y_1,y_2,y_1y_2\}$ of $K(y_1)\cdot K(y_2)=K(y_1,y_2)$ over $K$.  Let $y_3=c_0+c_1y_1+c_2y_2+c_3y_1y_2$, where $c_j\in K$ $(j=0,1,2,3)$. Then
\begin{eqnarray}
y_3^2&=&c_0^2+2c_0c_1y_1+2c_0c_2y_2 \nonumber \\
& &+y_1^2(c_1^2+2c_1c_2y_2)+y_2^2(c_2^2+2c_2c_3y_1) \nonumber \\
& &+2c_0c_3y_1y_2+c_3^2y_1^2y_2^2 \nonumber \\
&=&C_0+f_1(x)C_1+f_2(x)C_2+C_3f_1(x)f_2(x),
\end{eqnarray}

\[
\mathrm{where}
\left\{
\begin{array}{ccc}
C_0&=&c_0^2+2c_0c_1y_1+2c_0c_2y_2+2c_0c_3y_1y_2 \\
C_1&=&c_1^2+2c_1c_2y_2 \\
C_2&=&c_2^2+2c_2c_3y_1 \\
C_3&=&c_3^2
\end{array}
\right.
\in K(y_1,y_2).
\]
However, since $y_3^2=f_3(x)$ which is a cubic polynomial in $\mathbb{F}_q[x]$ and $f_1(x)f_2(x)$ is a polynomial of degree 6 in $\mathbb{F}_q[x]$, by comparing coefficients in (1) we know that $c_3^2=C_3=0$. Therefore, $C_0+f_1(x)C_1+f_2(x)C_2=y_3^2\in K$.  It implies that $\forall \sigma\in \mathrm{Gal}(K(y_1,y_2)/K),$
\begin{eqnarray}
&\sigma(C_0+f_1(x)C_1+f_2(x)C_2)=C_0+f_1(x)C_1+f_2(x)C_2 \nonumber \\
\Rightarrow&(\sigma-\iota)C_0+f_1(x)(\sigma-\iota)C_1+f_2(x)(\sigma-\iota)C_2=0,
\end{eqnarray}
where $\iota\in \mathrm{Gal}(K(y_1,y_2)/K)$ is the identity.  Since \footnote{Note: $(\sigma-\iota)C_i$ is only a notation used for convenience to represent $\sigma C_i-C_i$}$(\sigma-\iota)C_2=2c_2c_3(\sigma-\iota)y_1=0$ for $c_3=0$, instead of (2) we should have
\begin{eqnarray}
(\sigma-\iota)C_0+f_1(x)(\sigma-\iota)C_1=0.
\end{eqnarray}

Now we know that
\[
\left\{
\begin{array}{ccc}
(\sigma-\iota)C_0&=&2c_0c_1(\sigma-\iota)y_1+2c_0c_2(\sigma-\iota)y_2 \\
(\sigma-\iota)C_1&=&2c_1c_2(\sigma-\iota)y_2
\end{array}
\right.
.
\]
Consider the automorphism $\sigma$ in $\mathrm{Gal}(K(y_1,y_2)/K)$ defined by
\[
\left\{
\begin{array}{ccccc}
y_1&\mapsto&\bar{y_1}&=& -y_1\\
y_2&\mapsto&\bar{y_2}&=& -y_2\\
y_1y_2&\mapsto&\bar{y_1}\bar{y_2}&=& y_1y_2
\end{array}
\right.
\]
(Since $y_i$ and $\bar{y_i}$ both satisfy the equation $y^2-f_i(x)=0$, $y_i\bar{y_i}=-f_i(x)=-y_i^2 \Rightarrow \bar{y_i}=-y_i$ as $y_i \neq 0$). Then we obtain
\[
\left\{
\begin{array}{ccc}
(\sigma-\iota)C_0&=&-4c_0c_1y_1-4c_0c_2y_2 \\
(\sigma-\iota)C_1&=&-4c_1c_2y_2
\end{array}
\right.
\]
which are \textit{linearly independent over $K$} if both $(\sigma-\iota)C_0$ and $(\sigma-\iota)C_1$ are non-vanishing. This is a contradiction to (3) and hence the lemma follows. If $(\sigma-\iota)C_0=0$ or $(\sigma-\iota)C_1=0$, then we get $f_1(x)(\sigma-\iota)C_1=0$ or $(\sigma-\iota)C_0=0$ respectively, which also implies $(\sigma-\iota)C_0=(\sigma-\iota)C_1=0$.  Then we can merely consider a few cases:
\newcounter{count9}
\begin{list}{(\roman{count9})}{\usecounter{count9}}
\item $c_0=0$, $c_1=0$ and $c_2\neq 0$;
\item $c_0=0$, $c_1\neq 0$ and $c_2=0$;
\item $c_0=0$, $c_1=0$ and $c_2=0$;
\item $c_0\neq 0$, $c_1=0$ and $c_2=0$.
\end{list}
For cases (i), $y_3=c_2y_2 \Rightarrow f_3(x)=y_3^2=c_2^2y_2^2=c_2^2f_2(x)$. Again, by comparing coefficients we may have $c_2^2=1$. This implies that $y_3=\pm y_2$, which is not possible. Similarly, case (ii) is impossible as well. Cases (iii) and (iv) are trivially excluded, so the proof is completed.
\end{proof}

\bigskip
\begin{theorem}
\cite[Theorem C]{bib: zeta2} Let $\mathcal{C}$ be a (smooth, projective) algebraic curve defined over an arbitrary field.  Let $H_1,...H_k\leq\mathrm{Aut}(\mathcal{C})$ be (finite) subgroups with $H_i\cdot H_j=H_j\cdot H_i\quad i\neq j$, and let $g_{ij}$ denote the genus of the quotient curve $\mathcal{C}/H_i\cdot H_j$.  Then, for $n_1,...,n_k\in\mathbb{Z}$, the conditions
\[
\sum n_i n_j g_{ij}=0, \\
\sum_{j} n_j g_{ij}=0,\quad 1\leq i\leq n,
\]
are each equivalent to $\sum n_i \varepsilon_{n_i}=0$, where $\varepsilon\in\mathrm{End}^0(\mathcal{J}_\mathcal{C})$ are \textit{idempotents}, and hence both imply the isogeny relation
\[
\prod_{n_i>0} \mathcal{J}^{n_i}_{\mathcal{C}/H_i} \sim \prod_{n_i<0} \mathcal{J}^{|n_i|}_{\mathcal{C}/H_i}.
\]
In particular, if $g_{ij}=0$ for $2\leq i<j\leq k$ and if
\[
g_\mathcal{C}=g_{\mathcal{C}/H_2}+...+g_{\mathcal{C}/H_k}
\]
then we have (by taking $H_1=\{1\}$ above):
\[
\mathcal{J}_\mathcal{C} \sim \mathcal{J}_{\mathcal{C}/H_2} \times ... \times \mathcal{J}_{\mathcal{C}/H_k}.
\]
\end{theorem}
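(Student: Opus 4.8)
The plan is to convert the stated genus relations into an identity between \emph{idempotents} in $\mathrm{End}^0(\mathcal{J}_\mathcal{C})=\mathrm{End}(\mathcal{J}_\mathcal{C})\otimes\mathbb{Q}$ and then read the isogeny off that identity. To each finite $H\leq\mathrm{Aut}(\mathcal{C})$ I would attach the averaging element $\varepsilon_H:=\frac{1}{|H|}\sum_{h\in H}h\in\mathrm{End}^0(\mathcal{J}_\mathcal{C})$, using the natural action of $\mathrm{Aut}(\mathcal{C})$ on $\mathcal{J}_\mathcal{C}$. One checks immediately that $\varepsilon_H^2=\varepsilon_H$ and that $\varepsilon_H$ is fixed by the Rosati involution $\dagger$ of the canonical principal polarization, since $\dagger$ carries an automorphism of $\mathcal{C}$ to its inverse and $h\mapsto h^{-1}$ permutes $H$. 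Two structural facts are then needed. First, if $H_iH_j=H_jH_i$, so that $H_iH_j$ is a subgroup, a count of the number of ways each element of $H_iH_j$ is written as a product $ab$ with $a\in H_i$, $b\in H_j$ gives $\varepsilon_{H_i}\varepsilon_{H_j}=\varepsilon_{H_j}\varepsilon_{H_i}=\varepsilon_{H_iH_j}$. Second, the quotient map $\pi\colon\mathcal{C}\to\mathcal{C}/H$ is a separable Galois cover of degree $|H|$, so $\pi_*\pi^*=[|H|]$ and $\pi^*\pi_*=\sum_{h\in H}h$; hence $\varepsilon_H=\frac{1}{|H|}\pi^*\pi_*$, the map $\pi^*$ is an isogeny onto its image, and $\varepsilon_H\mathcal{J}_\mathcal{C}\sim\pi^*\mathcal{J}_{\mathcal{C}/H}\sim\mathcal{J}_{\mathcal{C}/H}$.

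Next I would introduce the $\mathbb{Q}$-valued trace $T$ on $\mathrm{End}^0(\mathcal{J}_\mathcal{C})$ given by the trace of the action on the rational Tate module $V_\ell\mathcal{J}_\mathcal{C}$ for some fixed $\ell\neq p$ (a characteristic-zero quantity, so that $T$ of an idempotent is the dimension of its image). Then $T(\varepsilon_H)=\dim_{\mathbb{Q}_\ell}\varepsilon_HV_\ell\mathcal{J}_\mathcal{C}=2\,g_{\mathcal{C}/H}$ by the second fact above, and combining with the first, $T(\varepsilon_{H_i}\varepsilon_{H_j})=T(\varepsilon_{H_iH_j})=2g_{ij}$. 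Put $\alpha:=\sum_i n_i\varepsilon_{H_i}$, a $\dagger$-symmetric element. A one-line computation gives $T(\varepsilon_{H_i}\alpha)=2\sum_j n_jg_{ij}$ and $T(\alpha^2)=2\sum_{i,j}n_in_jg_{ij}$. The pivotal input is the \emph{positivity of the Rosati form}: $T(\beta\beta^\dagger)>0$ for every nonzero $\beta$, valid over any field. Applied to $\beta=\alpha=\alpha^\dagger$ it gives $T(\alpha^2)=0\iff\alpha=0$. Thus the two sets of numerical hypotheses both become equivalent to $\sum_i n_i\varepsilon_{H_i}=0$: the condition $\sum n_in_jg_{ij}=0$ is exactly $T(\alpha^2)=0$; the condition $\sum_j n_jg_{ij}=0$ for all $i$ says $T(\varepsilon_{H_i}\alpha)=0$ for all $i$, whence $T(\alpha^2)=\sum_i n_iT(\varepsilon_{H_i}\alpha)=0$; and conversely $\alpha=0$ yields both.

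It remains to deduce the isogeny from the operator identity $\sum_{n_i>0}n_i\varepsilon_{H_i}=\sum_{n_i<0}|n_i|\varepsilon_{H_i}$ in $\mathrm{End}^0(\mathcal{J}_\mathcal{C})$. I would use Poincaré complete reducibility: write $\mathcal{J}_\mathcal{C}\sim\prod_\nu B_\nu^{m_\nu}$ with the $B_\nu$ pairwise non-isogenous simple abelian varieties, so $\mathrm{End}^0(\mathcal{J}_\mathcal{C})\cong\prod_\nu M_{m_\nu}(D_\nu)$ with $D_\nu$ a division algebra. In the $\nu$-block each idempotent $\varepsilon$ has a well-defined rank $r_\nu(\varepsilon)$ with $\varepsilon\mathcal{J}_\mathcal{C}\sim\prod_\nu B_\nu^{r_\nu(\varepsilon)}$, and $r_\nu$ is a positive constant times the reduced trace on that block, hence additive across the $\varepsilon_{H_i}$ occurring in the identity. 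Equating reduced traces block by block gives $\sum_{n_i>0}n_ir_\nu(\varepsilon_{H_i})=\sum_{n_i<0}|n_i|r_\nu(\varepsilon_{H_i})$ for each $\nu$, i.e.\ matching exponents of $B_\nu$ on the two sides; taking the product over $\nu$ and substituting $\varepsilon_{H_i}\mathcal{J}_\mathcal{C}\sim\mathcal{J}_{\mathcal{C}/H_i}$ yields $\prod_{n_i>0}\mathcal{J}_{\mathcal{C}/H_i}^{n_i}\sim\prod_{n_i<0}\mathcal{J}_{\mathcal{C}/H_i}^{|n_i|}$. The final ``in particular'' clause is the special case $H_1=\{1\}$, $n_1=1$, $n_j=-1$ for $j\geq2$, where $g_{1j}=g_{\mathcal{C}/H_j}$ and the hypotheses $g_\mathcal{C}=\sum_{j\geq2}g_{\mathcal{C}/H_j}$ and $g_{ij}=0$ $(2\leq i<j)$ are precisely what make $\sum_j n_jg_{ij}=0$ hold for every $i$.

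The step I expect to be the real obstacle is not any single calculation but assembling the geometric and analytic inputs uniformly over an arbitrary base field: one must know that $\pi\colon\mathcal{C}\to\mathcal{C}/H$ is genuinely a degree-$|H|$ Galois cover so that $\pi_*\pi^*=[|H|]$ and $\pi^*\pi_*=\sum h$ hold, one must invoke the positivity of the Rosati form in its characteristic-free form, and one must be slightly careful that the trace used is $\mathbb{Q}$-valued—hence taken on $V_\ell\mathcal{J}_\mathcal{C}$ rather than on the cotangent space, which would only compute dimensions modulo $p$. Once these are in place, the idempotent bookkeeping and the reducibility argument are routine.
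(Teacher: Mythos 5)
The paper does not prove Theorem 3.3 at all --- it is quoted verbatim from Kani--Rosen \cite[Theorem C]{bib: zeta2} --- so there is no in-paper argument to compare against; your reconstruction is correct and is essentially the original Kani--Rosen proof (averaging idempotents $\varepsilon_H=\frac{1}{|H|}\pi^*\pi_*$, the multiplicativity $\varepsilon_{H_i}\varepsilon_{H_j}=\varepsilon_{H_iH_j}$, the trace computation $T(\varepsilon_{H_i}\varepsilon_{H_j})=2g_{ij}$ combined with positivity of the Rosati form, and the passage from an idempotent relation to an isogeny via reduced traces in the semisimple decomposition of $\mathrm{End}^0(\mathcal{J}_\mathcal{C})$). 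The only points worth flagging are ones you already handle: using the $\ell$-adic trace rather than the cotangent space so the argument survives in characteristic $p$, and noting that the statement's ``$\sum n_i\varepsilon_{n_i}=0$'' should read $\sum n_i\varepsilon_{H_i}=0$.
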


\bigskip
\quad By considering $\mathcal{C}=\chi$, and taking $H_i$ as the following subgroups of $\mathrm{Aut}(\chi)=\mathrm{Aut}_{\mathbb{F}_q(t)}(\mathbb{F}_q(\chi))$:
\[
\left\{
\begin{array}{ccc}
H_1&=&\{1\} \\
H_2&=&<\sigma_2,\sigma_3> \\
H_3&=&<\sigma_1,\sigma_3> \\
H_4&=&<\sigma_1,\sigma_2> \\
\end{array}
\right.
\]
where
\[
\sigma_1:
\left\{
\begin{array}{ccc}
y_1&\mapsto&-y_1 \\
y_2&\mapsto&y_2 \\
y_3&\mapsto&y_3 \\
\end{array}
\right.
\]
\[
\sigma_2:
\left\{
\begin{array}{ccc}
y_1&\mapsto&y_1 \\
y_2&\mapsto&-y_2 \\
y_3&\mapsto&y_3 \\
\end{array}
\right.
\]
\[
\sigma_3:
\left\{
\begin{array}{ccc}
y_1&\mapsto&y_1 \\
y_2&\mapsto&y_2 \\
y_3&\mapsto&-y_3 \\
\end{array}
\right.
,
\]
we can easily check that the condition $H_i\cdot H_j=H_j\cdot H_i$ $\forall i,j=1,2,3,4$ is satisfied. Since $\mathbb{F}_q(\chi/H_2)=\mathbb{F}_q(\chi)_{H_2}=\mathbb{F}_q(t)(y_1)=\mathbb{F}_q(\mathcal{E}_1)$, we have $\chi/H_2\cong\mathcal{E}_1$. Similarly, we can also derive $\chi/H_3\cong\mathcal{E}_2$ and $\chi/H_4\cong\mathcal{E}_3$, and thus we know $g_{12}=g_{13}=g_{14}=1$. Moreover, as $\mathbb{F}_q(\chi/H_i\cdot H_j)=\mathbb{F}_q(\chi)_{H_i\cdot H_j}=\mathbb{F}_q(t)=\mathbb{F}_q(\mathbb{P}^1)$ for $2\leq i<j\leq 4$, $\chi/{H_i\cdot H_j}\cong \mathbb{P}^1$ $(2\leq i<j\leq 4)$, which implies $g_{ij}=0$ for $(2\leq i<j\leq 4)$. Therefore, $g_\chi=g_{\chi/H_2}+g_{\chi/H_3}+g_{\chi/H_4}$. Then using the particular case in the Theorem 3.3, we have
\begin{eqnarray*}
\mathcal{J}_\chi&\sim&\mathcal{J}_{\chi/H_2}\times\mathcal{J}_{\chi/H_3}\times\mathcal{J}_{\chi/H_4} \\
&=&\mathcal{J}_{\mathcal{E}_1}\times\mathcal{J}_{\mathcal{E}_2}\times\mathcal{J}_{\mathcal{E}_3}.
\end{eqnarray*}
This completes the proof of Proposition 3.1.

\bigskip
\begin{corollary}
Let $p$ be odd. Let $\mathcal{E}_i/\mathbb{F}_q$ $(i=1,2,3)$ be pairwise non-isomorphic elliptic curves. Suppose $j_3\in \mathbb{F}_q$. Then there exists a curve $\chi/\mathbb{F}_q$of genus 3 such that the Jacobian $\mathcal{J}_\chi$ is isogenous to $\mathcal{E}_1\times \mathcal{E}_2\times \mathcal{E}_3$ if one of the following is satisfied:
\newcounter{count6}
\begin{list}{(\roman{count6})}{\usecounter{count6}}
\item $|\mathcal{E}_1(\mathbb{F}_q)\cap\mathcal{E}_1[2]|=|\mathcal{E}_2(\mathbb{F}_q)\cap\mathcal{E}_2[2]|=|\mathcal{E}_3(\mathbb{F}_q)\cap\mathcal{E}_3[2]|=2$ and $\displaystyle\frac{\lambda_2(1-\lambda_1)}{\lambda_2-\lambda_1}$ is Legendre equivalent to $\lambda_3$;
\item $|\mathcal{E}_1(\mathbb{F}_q)\cap\mathcal{E}_1[2]|=|\mathcal{E}_2(\mathbb{F}_q)\cap\mathcal{E}_2[2]|=|\mathcal{E}_3(\mathbb{F}_q)\cap\mathcal{E}_3[2]|=4$ and $\displaystyle\frac{\lambda_2}{\lambda_1}$ is Legendre equivalent to $\lambda_3$.
\end{list}
\end{corollary}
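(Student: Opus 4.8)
The plan is to obtain the corollary as a direct composition of Lemma 2.7 with Proposition 3.1; there is essentially no new mathematical input. First I would observe that hypotheses (i) and (ii) of the corollary are, verbatim, the two alternative sets of conditions in the statement of Lemma 2.7 (the first matching the ``$=2$'' case, the second the ``$=4$'' case). Since in both situations the $\mathcal{E}_i/\mathbb{F}_q$ are pairwise non-isomorphic and $j_3\in\mathbb{F}_q$, Lemma 2.7 applies and produces elliptic curves $\mathcal{E}'_i/\mathbb{F}_q$ $(i=1,2,3)$ with $\mathcal{E}'_i\cong\mathcal{E}_i$ over $\mathbb{F}_q$ and with $|R_{\mathcal{E}'_i}\cap R_{\mathcal{E}'_j}|=3$ for all $i\neq j$ and $|R_{\mathcal{E}'_1}\cap R_{\mathcal{E}'_2}\cap R_{\mathcal{E}'_3}|=2$.

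Next I would feed the triple $(\mathcal{E}'_1,\mathcal{E}'_2,\mathcal{E}'_3)$ into Proposition 3.1. The $\mathcal{E}'_i$ remain pairwise non-isomorphic, being $\mathbb{F}_q$-isomorphic to the pairwise non-isomorphic $\mathcal{E}_i$, and their ramification sets satisfy precisely conditions (i) and (ii) there. Hence Proposition 3.1 yields a curve $\chi/\mathbb{F}_q$ of genus $3$ with $\mathcal{J}_\chi\sim\mathcal{E}'_1\times\mathcal{E}'_2\times\mathcal{E}'_3$.

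Finally I would upgrade this to the stated conclusion: an $\mathbb{F}_q$-isomorphism $\mathcal{E}'_i\cong\mathcal{E}_i$ is in particular an $\mathbb{F}_q$-isogeny, and a product of isogenies is an isogeny, so $\mathcal{E}'_1\times\mathcal{E}'_2\times\mathcal{E}'_3\sim\mathcal{E}_1\times\mathcal{E}_2\times\mathcal{E}_3$ over $\mathbb{F}_q$; composing with the isogeny furnished by Proposition 3.1 gives $\mathcal{J}_\chi\sim\mathcal{E}_1\times\mathcal{E}_2\times\mathcal{E}_3$, which is what is claimed.

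I do not expect a genuine obstacle here, since the corollary is pure bookkeeping. The only point demanding a moment's care is making the hypotheses line up cleanly across the two quoted results: Lemma 2.7 takes an ``unprimed'' triple obeying the numerical/Legendre conditions and returns a ``primed'' triple obeying the intersection conditions, whereas Proposition 3.1 consumes a triple obeying the intersection conditions (its statement mixes primed and unprimed indices, so I would note explicitly that the output of Lemma 2.7 is exactly an admissible input for Proposition 3.1). One should also keep track of the role of $j_3\in\mathbb{F}_q$: it is what guarantees the isomorphisms produced in Lemma 2.7 are defined over $\mathbb{F}_q$ rather than only over $\mathbb{F}_q(j_3)$, and hence that the final isogeny is an $\mathbb{F}_q$-isogeny. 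Beyond this, nothing is hard.
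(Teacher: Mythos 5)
Your proposal is correct and is exactly the paper's argument: the paper proves the corollary in one line as ``the direct consequence of Lemma 2.7 and Proposition 3.1,'' and you have simply spelled out that composition (including the harmless replacement of $\mathcal{E}_i$ by the $\mathbb{F}_q$-isomorphic $\mathcal{E}'_i$, which changes nothing up to isogeny). No further comment is needed.
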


\begin{proof}
This is the direct consequence of Lemma 2.7 and Proposition 3.1.
\end{proof}

\section{Main Result}

\quad Before we go through the main result, we should first know some facts. If there are three distinct integers $t_i$ $(i=1,2,3)$ satisfying one of the conditions in Lemma 2.1, then there exist three pairwise non-isomorphic elliptic curves $\mathcal{E}_i/\mathbb{F}_q$ $(i=1,2,3)$ such that $|\mathcal{E}_i(\mathbb{F}_q)|=q+1+t_i$. $\mathcal{E}_i$ is called \textit{an elliptic curve corresponding to $t_i$} in this paper.

\bigskip
\begin{definition}
Let $t_i$ $(i=1,2,3)$ be three distinct integers satisfying one of the conditions in Lemma 2.1. If there exists $\mathcal{E}_i/\mathbb{F}_q$ $(i=1,2,3)$ which are elliptic curves corresponding to $t_i$ with Legendre coefficients $\lambda_i$ such that:
\newcounter{count7}
\begin{list}{(\roman{count7})}{\usecounter{count7}}
\item $\displaystyle\frac{\lambda_{i_2}(1-\lambda_{i_1})}{\lambda_{i_2}-\lambda_{i_1}}$ is Legendre equivalent to $\lambda_{i_3}$, where $\{i_1,i_2,i_3\}=\{1,2,3\}$, and $j_{i_3}\in \mathbb{F}_q$; Then $t_i$ $(i=1,2,3)$ are said to be \textit{weakly Legendre consistent} and $\mathcal{E}_i$ are called \textit{elliptic curves corresponding to $t_i$ weakly}. Or else
\item $\displaystyle\frac{\lambda_{i_2}}{\lambda_{i_1}}$ is Legendre equivalent to $\lambda_{i_3}$, where $\{i_1,i_2,i_3\}=\{1,2,3\}$, $j_{i_3}\in \mathbb{F}_q$ and no $\mathcal{E}_i$ has the property $|\mathcal{E}_i(\mathbb{F}_q)\cap \mathcal{E}_i[2]|=2$; Then $t_i$ $(i=1,2,3)$ are said to be \textit{strongly Legendre consistent} and $\mathcal{E}_i$ are called \textit{elliptic curves corresponding to $t_i$ strongly}.
\end{list}
\end{definition}

\bigskip
With this new definition, the main theorem in this paper can thus be formulated.

\bigskip
\begin{theorem}
Let $q=p^r$ for a prime $p>2$ and an integer $r\geq 1$. Let $t_i$ $(i=1,2,3)$ be three distinct integers satisfying one of the conditions in Lemma 2.1. Then the function $\prod_{i=1}^3(T^2+t_iT+q)$ is the characteristic polynomial of the Frobenius endomorphism of the Jacobian of a curve of genus 3 over $\mathbb{F}_q$ if one of the following conditions is satisfied:
\newcounter{count8}
\begin{list}{(\roman{count8})}{\usecounter{count8}}
\item $q+1+t_i\equiv 2 \pmod 4$ $\forall i=1,2,3$ and $t_i$ $(i=1,2,3)$ are weakly Legendre consistent;
\item $q+1+t_i\equiv 0 \pmod 4$ $\forall i=1,2,3$ and $t_i$ $(i=1,2,3)$ are strongly Legendre consistent.
\end{list}
\end{theorem}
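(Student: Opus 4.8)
The plan is to derive Theorem 4.2 from Corollary 3.4 in two moves: first translate the numerical hypotheses of the theorem into the $2$-torsion hypotheses of Corollary 3.4, and then pass from the resulting isogeny $\mathcal{J}_\chi\sim\mathcal{E}_1\times\mathcal{E}_2\times\mathcal{E}_3$ to the asserted characteristic polynomial.

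First I would fix the elliptic curves realizing the data. Since each $t_i$ satisfies one of the conditions of Lemma 2.1, there is an elliptic curve over $\mathbb{F}_q$ with $q+1+t_i$ rational points; and the hypothesis of weak (resp. strong) Legendre consistency, via Definition 4.1, lets me choose $\mathcal{E}_i/\mathbb{F}_q$ $(i=1,2,3)$ corresponding to $t_i$ so that their Legendre coefficients $\lambda_i$ satisfy $\frac{\lambda_{i_2}(1-\lambda_{i_1})}{\lambda_{i_2}-\lambda_{i_1}}$ is Legendre equivalent to $\lambda_{i_3}$ (resp. $\frac{\lambda_{i_2}}{\lambda_{i_1}}$ is Legendre equivalent to $\lambda_{i_3}$) with $j_{i_3}\in\mathbb{F}_q$, and, in the strong case, so that no $\mathcal{E}_i$ has $|\mathcal{E}_i(\mathbb{F}_q)\cap\mathcal{E}_i[2]|=2$. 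After relabelling I may assume $i_3=3$. Because the $t_i$ are pairwise distinct, the point counts $|\mathcal{E}_i(\mathbb{F}_q)|=q+1+t_i$ are pairwise distinct, so the $\mathcal{E}_i$ are pairwise non-isomorphic.

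Next comes the key translation step. The group $\mathcal{E}_i(\mathbb{F}_q)\cap\mathcal{E}_i[2]$ is a subgroup of $\mathcal{E}_i[2]\cong(\mathbb{Z}/2\mathbb{Z})^2$, so its order is $1$, $2$, or $4$, it divides $|\mathcal{E}_i(\mathbb{F}_q)|=q+1+t_i$, and (by Cauchy) it is at least $2$ whenever $|\mathcal{E}_i(\mathbb{F}_q)|$ is even. In case (i), $q+1+t_i\equiv 2\pmod 4$, so $\gcd(4,q+1+t_i)=2$ and the order is exactly $2$ for every $i$. In case (ii), $4\mid q+1+t_i$, so the order is $2$ or $4$, and the strong-consistency clause excludes $2$; hence the order is exactly $4$ for every $i$. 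At this point all the hypotheses of the appropriate alternative of Corollary 3.4 hold — pairwise non-isomorphic, $j_3\in\mathbb{F}_q$, the common value of $|\mathcal{E}_i(\mathbb{F}_q)\cap\mathcal{E}_i[2]|$, and the Legendre-equivalence relation — so there is a curve $\chi/\mathbb{F}_q$ of genus $3$ with $\mathcal{J}_\chi\sim\mathcal{E}_1\times\mathcal{E}_2\times\mathcal{E}_3$.

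Finally I would invoke the two standard facts that isogenous abelian varieties over $\mathbb{F}_q$ share the characteristic polynomial of their Frobenius endomorphism, and that the characteristic polynomial of Frobenius of a product of abelian varieties is the product of the individual ones (immediate from the decomposition of Tate modules). The Frobenius of $\mathcal{E}_i$ has characteristic polynomial $T^2-a_iT+q$ with $a_i=q+1-|\mathcal{E}_i(\mathbb{F}_q)|=-t_i$, i.e. $T^2+t_iT+q$. Hence the characteristic polynomial of Frobenius of $\mathcal{E}_1\times\mathcal{E}_2\times\mathcal{E}_3$, and therefore of $\mathcal{J}_\chi$, is $\prod_{i=1}^3(T^2+t_iT+q)$, as claimed. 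I expect the only genuine content to be the $2$-torsion translation and its exact compatibility with Definition 4.1: one must notice that the congruence alone pins down $|\mathcal{E}_i(\mathbb{F}_q)\cap\mathcal{E}_i[2]|$ in case (i) but that in case (ii) the extra clause excluding rational $2$-torsion of order $2$ is really needed, since $4\mid|\mathcal{E}_i(\mathbb{F}_q)|$ is compatible with a cyclic $2$-part of order $4$. The remaining steps — the index relabelling matching Corollary 3.4 and the two generalities about characteristic polynomials — are routine.
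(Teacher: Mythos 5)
Your proposal is correct and follows essentially the same route as the paper: realize the $t_i$ by elliptic curves via Lemma 2.1 and Definition 4.1, observe that the congruence conditions (together, in case (ii), with the extra clause in the definition of strong Legendre consistency) pin down $|\mathcal{E}_i(\mathbb{F}_q)\cap\mathcal{E}_i[2]|$ as $2$ or $4$, apply Corollary 3.4, and read off the characteristic polynomial from the isogeny. You actually justify more carefully than the paper does why the congruences force the $2$-torsion counts (the paper calls this ``trivial''), but the argument is the same.
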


\begin{proof}
In the case (i), let $\mathcal{E}_i/\mathbb{F}_q$ be the elliptic curves corresponding to $t_i$ weakly. Then it is trivial that $|\mathcal{E}_1(\mathbb{F}_q)\cap\mathcal{E}_1[2]|=|\mathcal{E}_2(\mathbb{F}_q)\cap\mathcal{E}_2[2]|=|\mathcal{E}_3(\mathbb{F}_q)\cap\mathcal{E}_3[2]|=2$ and by definiton (without loss of generality) $\displaystyle\frac{\lambda_2(1-\lambda_1)}{\lambda_2-\lambda_1}$ is Legendre equivalent to $\lambda_3$ and $j_3\in \mathbb{F}_q$. Therefore by Corollary 3.4, there exists a curve $\chi/\mathbb{F}_q$of genus 3 such that the Jacobian $\mathcal{J}_\chi$ is isogenous to $\mathcal{E}_1\times \mathcal{E}_2\times \mathcal{E}_3$. In other word, the characteristic polynomial of the Frobenius endomorphism of the Jacobian of this curve is $\prod_{i=1}^3(T^2+t_iT+q)$.

In the case (ii), let $\mathcal{E}_i/\mathbb{F}_q$ be the elliptic curves corresponding to $t_i$ strongly. Then similarly by definiton, $|\mathcal{E}_1(\mathbb{F}_q)\cap\mathcal{E}_1[2]|=|\mathcal{E}_2(\mathbb{F}_q)\cap\mathcal{E}_2[2]|=|\mathcal{E}_3(\mathbb{F}_q)\cap\mathcal{E}_3[2]|=4$, $\displaystyle\frac{\lambda_2}{\lambda_1}$ is Legendre equivalent to $\lambda_3$ and $j_3\in \mathbb{F}_q$. Thus by applying Corollary 3.4, we get the same result as in the previous case.
\end{proof}

\bigskip
\begin{remark}
By adopting similar method in \cite{bib: zeta1}, we are only able to do the case up to $g=3$. For the cases $g\geq 4$, the method is invalid since the Proposition 3.1 breaks down.  Strictly speaking, we cannot construct a curve of genus $g$ by just considering the corresponding (smooth) algebraic curve to the composite field of the function fields of $g$ pairwise non-isomorphic elliptic curves, and then applying the Hurwitz formula to prove it is of genus g.
\end{remark}

\section{Further Research}
\begin{enumerate}
\item Find out all of the zeta-functions of curves of genus 2 over finite fields, including those not being determined in the paper \cite{bib: zeta1}.
\item Find out the intrinsic reason(s) why curves of genus 4 or above cannot be constructed by similar method in the case $g=2$ or $3$.
\item Improve the method used in this paper such that the conditions used for the construction are weaker.
\end{enumerate}

\section{Acknowledgement}
\quad I hereby give my sincere appreciation to my mentor, Dr. Eric Wambach. He gave me a lot of support and valuable advice about both my project and my future study plan during the ten-week SURF program. I give my special thanks to my sponsor, the Caltech Alumni Association (Hong Kong Chapter), for letting me gain this meaningful research experience. I am truly grateful for the great support from Mr. Chris Lyons, who helped me to tackle many problems encountered in the project while I was in Caltech. I would like to thank Mr. Gary Tsang as well for teaching me how to use LaTeX and helping me deal with the problems in compiling this paper.  Lastly, I offer my heartfelt thanks to all people who have given me a hand in any aspect.

\end{document}